\documentclass{amsart}
\usepackage[utf8]{inputenc}
\usepackage{latexsym,amsmath,amsthm}
\usepackage{amssymb}
\usepackage[pdftex,colorlinks,backref=page,citecolor=blue]{hyperref}
\usepackage{geometry}
\usepackage{comment}
\usepackage[pdftex]{graphicx}

\usepackage{graphpap, color, paralist, pstricks}

\geometry{margin=2cm}

\linespread{1.3}
\setlength{\parskip}{2pt}

\usepackage{setspace}
\setdisplayskipstretch{1}

\raggedbottom

\newtheorem{theorem}{Theorem}[section]
\newtheorem{proposition}[theorem]{Proposition}

\newtheorem{lemma}[theorem]{Lemma}

\newtheorem{claim}[theorem]{Claim}

\newtheorem{fact}[theorem]{Fact}

\newcommand{\lam}{\lambda}

\newcommand{\cA}{\mathcal{A} }

\newcommand{\cC}{\mathcal{C} }

\newcommand{\cF}{\mathcal{F} }

\newcommand{\cI}{\mathcal{I} }

\newcommand{\bI}{\mathbf{I}}
\newcommand{\bT}{\mathbf{T}}
\newcommand{\bX}{\mathbf{X}}
\newcommand{\bY}{\mathbf{Y}}
\newcommand{\bff}{\mathbf{f}}

\newcommand{\beq}[1]{\begin{equation}\label{#1}}
	\newcommand{\enq}[0]{\end{equation}}

\newcommand{\eps}{\epsilon}

\newcommand{\nin}[0]{\noindent}

\newcommand{\ov}[0]{\overline}
\newcommand{\sub}[0]{\subseteq}

\newcommand{\E}{\mathbb{E}}
\newcommand{\pr}{\mathbb{P}}

\newcommand{\dn}[1]{d_{N({#1})}}

\newcommand{\al}{\alpha}

\title{Note on the Number of Antichains in Generalizations of the Boolean Lattice}

\author{Jinyoung Park $^\ddagger$}
\thanks{$^\ddagger$ Department of Mathematics, Courant Institute of Mathematical Sciences, New York University. Research supported by NSF grant DMS-2153844.}
\email{jinyoungpark@nyu.edu}

\author{Michail Sarantis $^*$}
\thanks{$^*$ Department of Mathematical Sciences, Carnegie Mellon University, Pittsburgh. Research is supported in part by the NSF grant DMS-2151283 and the Onassis Foundation - Scholarship F ZP 051-1/2019-2020. }
\email{msaranti@andrew.cmu.edu}

\author{Prasad Tetali $^\dagger$}
\thanks{$^\dagger$ Department of Mathematical Sciences, Carnegie Mellon University, Pittsburgh. Research supported in part by the NSF grant DMS-2151283 and Alexander M. Knaster Professorship.}
\email{ptetali@cmu.edu}

\date{}

\begin{document}
	
	\maketitle

 \begin{abstract}
We give a short and self-contained argument that shows that, for any positive integers $t$ and $n$ with $t =O\Bigl(\frac{n}{\log n}\Bigr)$,  the number $\alpha([t]^n)$ of antichains of the poset $[t]^n$ is at most
\[\exp_2\Bigl(1+O\Bigl(\Bigl(\frac{t\log^3 n}{n}\Bigr)^{1/2}\Bigr)\Bigr)N(t,n)\,,\]
where $N(t,n)$ is the size of a largest level of $[t]^n$. This, in particular, says that if $t \ll n/\log^3 n$ as $n \rightarrow \infty$, then $\log\alpha([t]^n)=(1+o(1))N(t,n)$, giving a (partially) positive answer to a question of Moshkovitz and Shapira for $t, n$ in this range.

Particularly for $t=3$, we prove a better upper bound:
\[\log\alpha([3]^n)\le(1+4\log 3/n)N(3,n),\]
which is the best known upper bound on the number of antichains of $[3]^n$.
\end{abstract}
	
	\section{Introduction}
    An \textit{antichain} of a poset $P$ is a set of elements of $P$, any two of which are incomparable in the partial order. We denote by $\al(P)$ the number of antichains of $P$. "Dedekind's Problem" of 1897 \cite{Dedekind} asks for the number of antichains of the Boolean lattice $B_n=\{0,1\}^n$. Since any subsets of a middle layer of $B_n$ is an antichain, a trivial lower bound is $\al(B_n)\geq 2^{\binom{n}{\lfloor n/2 \rfloor}}$. 
	Kleitman \cite{Kleitman1969OnDP} and subsequently Kleitman and Markowsky \cite{Kleitman1975OnDP} proved that this trivial lower bound is optimal in the logarithmic scale, namely,
	\beq{ineq_KM} \log\al(B_n)\leq \Bigl(1+O\Bigl(\frac{\log n}{n}\Bigr)\Bigr)\binom{n}{\lfloor n/2\rfloor}.\enq
\nin (In this paper, $\log$ always means $\log_2$.) 	Asymptotics for $\alpha(B_n)$ itself was obtained by Korshunov \cite{Korshunov1981} via an extremely complicated argument. Later, Sapozhenko \cite{Sapozhenko1989} gave a simpler (yet still difficult) proof for the asymptotics.
 
	Kahn \cite{Kahn01} used entropy ideas to bound the number of independent sets of a finite, regular bipartite graph. This idea is extended in \cite{Kahn2002Entropy} to the layers of a graded poset to recover the result of Kleitman and Markowsky in \eqref{ineq_KM}. Independently, Pippenger \cite{Pippenger1999EntropyAE} also gave a slightly weaker bound $\log\al(B_n)\leq \left(1+O\left(\frac{\log^{3/2} n}{n^{1/4}}\right)\right)\binom{n}{\lfloor n/2\rfloor}$. Pippenger's work also uses entropy functions, but his approach was more akin to that of \cite{Kleitman1969OnDP}.
	
	Carroll, Cooper and Tetali \cite{Carroll2012CountingAA} considered the antichain count question for the following natural generalizations of the Boolean lattice: for an integer $t\geq 2$, let $[t]^n$ be the poset consisting of all $n$-tuples $(x_1,\dots,x_n)$ of integers in $\{0,1,\dots,t-1\}$ with the partial order $\prec$ defined by $x\prec y\Leftrightarrow x_i\leq y_i$\,, for all $1\leq i\leq n$. Following Pippenger's approach, they proved an analogous result for this generalized Boolean lattice:

	\begin{theorem}[\cite{Carroll2012CountingAA}]\label{thm:CCT} For integers $t, n$ such that $1<t<n$,
		\beq{eq.CCT} \log\al([t]^n)\leq \left(1+\frac{11t^2\log t (\log n)^{3/2}}{n^{1/4}}\right)N(t,n),\enq
        where $N(t,n)$ is the size of the middle layer(s) of $[t]^n$.
	\end{theorem}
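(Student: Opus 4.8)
\textit{Proof proposal.}
I would run a Pippenger-style enumeration, with $P:=[t]^n$ in the role of the Boolean lattice. Write $N:=N(t,n)$, let $L_0,\dots,L_{(t-1)n}$ be the levels of $P$, put $m:=\lfloor(t-1)n/2\rfloor$ so that $|L_m|=N$, and record four standard facts. \emph{(a)} An antichain is the set of maximal elements of a unique order ideal, so $\al(P)$ equals the number of order ideals of $P$, which I bound. \emph{(b)} $|L_k|$ is the coefficient of $x^k$ in $(1+x+\dots+x^{t-1})^n$; this sequence is symmetric and log-concave, and a local central limit theorem for the sum of $n$ independent variables uniform on $\{0,\dots,t-1\}$ — of variance $\sigma^2:=\tfrac{(t^2-1)n}{12}$ — yields $N=\Theta\!\bigl(t^n/(t\sqrt n)\bigr)$, $|L_{m\pm r}|=N\,e^{-\Theta(r^2/\sigma^2)}$ for $|r|\le\sigma\sqrt{\log n}$, and $\sum_{|k-m|>w}|L_k|\le t^n e^{-\gO(w^2/\sigma^2)}$. \emph{(c)} $P$, a product of chains, has the normalized matching property, so the LYM inequality $\sum_k|\cA\cap L_k|/|L_k|\le1$ holds for every antichain $\cA$; in particular $|\cA|\le N$, and for $S\sub L_k$ with $k\le m$ one has $|\partial^+S|\ge(|L_{k+1}|/|L_k|)|S|$, where $|L_{k+1}|/|L_k|=1+O(w/\sigma^2)$ whenever $|k-m|\le w$ (with mirror statements for down-shadows above $m$). \emph{(d)} Every vertex of the Hasse diagram has at most $n$ upper and at most $n$ lower covers, so a single shadow step is specified by a coordinate and a value, i.e.\ by $\log n+\log t$ bits.

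\textbf{Reduction to a central band.} Fix a half-width $w$, set $W:=\{k:|k-m|\le w\}$, and split an antichain as its part on the levels in $W$ together with its part on the levels outside $W$. Forgetting the antichain constraints between $W$ and the two tails only increases the count, so
\[\al(P)\ \le\ \bigl(\#\text{ antichains supported on }W\bigr)\cdot 2^{\sum_{k\notin W}|L_k|}\ \le\ \bigl(\#\text{ antichains supported on }W\bigr)\cdot 2^{\,t^n e^{-\gO(w^2/\sigma^2)}} .\]
Choosing $w$ to be a suitable multiple of $\sigma\sqrt{\log n}=\Theta\!\bigl(t\sqrt{n\log n}\bigr)$ makes the tail factor $2^{o(N)}$ throughout the range of $t$ in which Theorem~\ref{thm:CCT} is meaningful, so the task reduces to bounding the number of antichains living on the $2w+1$ levels of $W$.

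\textbf{The central band (the crux).} Here $\sum_{k\in W}|L_k|\approx(2w+1)N\gg N$, so no crude bound works; one must exploit that inside $W$ the consecutive-level shadows are nearly perfect matchings — by (c) each up-shadow is larger by a factor $1+O(w/\sigma^2)$, which is $1+o(1)$ for our $w$ — and that any antichain has at most $N$ elements. I would build the order ideal restricted to $W$ level by level from the bottom: given its part below level $k$, with complementary filter $F_{k-1}\sub L_{k-1}$, the new minimal filter elements $\cB_k$ on level $k$ form an arbitrary subset of $L_k\sm\partial^+(F_{k-1})$, a set of size at most $|L_k|-(|L_k|/|L_{k-1}|)|F_{k-1}|$, and $\bigcup_k\cB_k$ is an antichain, so $\sum_k|\cB_k|\le N$. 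A naive analysis of this — the entropy chain rule $\log\#\le\sum_{k\in W}H(\cB_k\mid F_{k-1})$ with the subadditivity bound $H(\cB_k\mid F_{k-1})\le|L_k|\,h(\E|\cB_k|/|L_k|)$, or equivalently a chain-cover bound $\log\#\le N\log(2w+2)$ — gives a factor of order $\log n$ too much, because it does not use that the $\cB_k$'s must be pairwise incomparable across the whole band. The Pippenger-type remedy I would pursue is to describe the band-antichain by its trace on the central level(s) plus, for each of the comparatively few elements on a level $L_{m\pm j}$ with $j\ge1$, a saturated chain of length $j$ to the centre — costing $j(\log n+\log t)+\log j$ bits by (d) — and then to show, using the near-perfect-matching property together with $\sum_k|\cB_k|\le N$, that all these chains cost only $o(N)$ bits in total. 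The dependence on $t$ is forced: $\sigma^2=\Theta(t^2 n)$ pins the band half-width at $w=\Theta(t\sqrt{n\log n})$ and a shadow step costs $\log n+\log t$ bits, and these together with the choice of $w$ produce the prefactor $t^2\log t$ in the error term $O\!\bigl(t^2\log t\,(\log n)^{3/2}/n^{1/4}\bigr)$. I expect essentially all the work — and the main obstacle — to lie in this band estimate: proving that the multi-level antichains inside $W$ number at most $2^{(1+o(1))N}$ with an $o(1)$ as small as $t^2\log t\,(\log n)^{3/2}/n^{1/4}$, while carrying the $t$-fold inflation of the degrees and shadow ratios. The local central limit input, the normalized-matching facts, and the tail estimate are all routine.
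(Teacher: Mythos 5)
Theorem~\ref{thm:CCT} is cited from Carroll--Cooper--Tetali and not reproved in this paper, but the introduction records that CCT ``follow Pippenger's approach,'' and Section~\ref{sec.t^n} of this paper runs precisely that machinery, more carefully, to obtain Theorem~\ref{thm:t^n}. Your plan is a genuinely different route, and it has a gap at exactly the point you flag as the crux. CCT/Pippenger fix a chain decomposition of $[t]^n$ into $N$ chains with cover steps (cf.\ Proposition~\ref{GK_extend}); an order ideal meets each chain in an interval whose endpoint $\gamma_j(f)$ costs about $\log((t-1)n)$ bits to name, which naively gives $N\log n$; the saving comes from an independent biased coin $\mathbf y_j$ per chain that decides whether to expose $\gamma_j(f)$, together with the bound $H(\bff)\le H(\tilde\bY)+H(\hat\bY)$, where $\tilde\bY=(\mathbf y_j\gamma_j(\bff))_j$ is the exposed data and $\hat\bY$ the residual. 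Tuning the exposure probability $p$ trades $H(\tilde\bY)\approx pN\log n$ against the excess of $H(\hat\bY)$ over $N$. This random-exposure step is precisely the device that removes the $\log n$ factor, and nothing in your proposal replaces it.

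Your band reduction is fine, but the central-band argument does not close. You propose to encode the band-antichain by its trace on $L_m$ plus, for each element on $L_{m\pm j}$ with $j\ge 1$, a saturated chain of length $j$ to the centre, and you need the total chain cost to be $o(N)$. Write $a_j:=|\cA\cap L_{m\pm j}|$. LYM gives $\sum_j a_j\,e^{\Theta(j^2/\sigma^2)}\le N(1+o(1))$, and the cost you propose is $\Theta\bigl((\log n)\sum_j a_j\,j\bigr)$. Maximizing $\sum_j a_j\,j$ under the LYM constraint concentrates mass at $j\asymp\sigma$ and yields $\Theta(N\sigma)$, so the worst-case code length is $\Theta(N\sigma\log n)=\Theta(Nt\sqrt n\log n)\gg N$; a variable-length-encoding bound then gives nothing. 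An expectation/entropy version would require knowing a priori that a uniform random antichain concentrates near $L_m$, which is essentially the statement being proved. There is also a decoding problem: an antichain element off $L_m$ need not lie above or below any antichain element on $L_m$, so the ``trace on $L_m$'' cannot anchor the saturated chains, and the scheme does not specify a decoder. Without a substitute for the random-exposure step the central-band estimate is not merely unfinished but broken; I would abandon the per-element encoding and rework the band bound along the chain-decomposition/entropy lines the paper itself uses in Section~\ref{sec.t^n}.
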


\nin In \cite{Mattner2007MaximalPO}, it was proved that for $n \rightarrow \infty$ and every $t$,
    \beq{eq_MR} N(t,n)=t^n\sqrt{\frac{6}{\pi(t^2-1)n}}\left(1+o_n(1)\right).\enq

We note that getting the right dependency of $\alpha([t]^n)$ on each of $t$ and $n$ has an interesting connection to some Ramsey-type problems, which we will briefly discuss in Section \ref{sec.Ramsey}. Tsai \cite{Tsai2019} gave the following bound which is better than the bound in \eqref{eq.CCT} for  $t \gg \frac{n^{1/8}}{(\log n)^{3/4}}$.

\begin{theorem}[\cite{Tsai2019}] For any integers $t, n \ge 1$,
\[\log\alpha([t]^n) \le N(t,n)\log(t+1).\]    
\end{theorem}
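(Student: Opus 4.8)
The plan is to translate antichain-counting into counting monotone labelings. Antichains of a poset correspond to its order ideals (down-sets), and since $[t]^n=[t]^{n-1}\times C_t$ with $C_t$ the $t$-element chain, an order ideal of $[t]^n$ is exactly a weakly decreasing chain $I_0\supseteq I_1\supseteq\dots\supseteq I_{t-1}$ of order ideals of $Q:=[t]^{n-1}$, read off from its $t$ slices. Chains of $t$ order ideals of $Q$ are in bijection with order-preserving maps $Q\to\{0,1,\dots,t\}$, so one obtains the clean identity $\alpha([t]^n)=\Omega_{[t]^{n-1}}(t+1)$, the order polynomial of $[t]^{n-1}$ at $t+1$. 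The goal then becomes: the number of monotone maps $[t]^{n-1}\to\{0,1,\dots,t\}$ is at most $(t+1)^{N(t,n)}$. (As a check, for $n=1,2$ the identity gives $\alpha([t]^1)=t+1$ and $\alpha([t]^2)=\binom{2t}{t}$, both comfortably inside the bound.)

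To estimate $\Omega_{[t]^{n-1}}(t+1)$ I would build a monotone $f\colon Q\to\{0,\dots,t\}$ rank by rank in the graded poset $Q=[t]^{n-1}$: given $f$ on the ranks below $j$, the number of legal values for $f$ on the $j$-th rank $L_j$ is $\prod_{x\in L_j}\bigl(t+1-M(x)\bigr)$, where $M(x)=\max\{f(y):y\lessdot x\}$ is the largest value already placed on the lower covers of $x$; taking logarithms and summing over $j$ bounds $\log\Omega_{[t]^{n-1}}(t+1)$. The arithmetic to aim for is that the rank-by-rank increments contribute, in total, at most $N(t,n)$ copies of $\log(t+1)$. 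The structural inputs are that $[t]^{n-1}$ is a normalized-matching (Peck) poset with symmetric, unimodal rank sizes, together with $\sum_j\bigl(|L_j(Q)|-|L_{j-1}(Q)|\bigr)_+=N(t,n-1)\le N(t,n)$: going up toward the middle the ranks expand, and strong expansion should force $M(x)$ to be large on the bulk ranks, so $t+1-M(x)$ is much smaller than $t+1$ there, and only about $N(t,n)$ coordinates effectively pay the full $\log(t+1)$.

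The hard part is making that bookkeeping actually telescope to $N(t,n)\log(t+1)$ rather than to something weaker. A worst-case, element-by-element bound at each rank only gives the trivial $(t+1)^{t^{n-1}}$, which is far too big; and the most natural clean argument, namely taking a symmetric chain decomposition of $[t]^n$ and using $\alpha\le\prod_C(|C|+1)$, gives $\log\alpha([t]^n)\le\sum_C\log(|C|+1)=N(t,n)\log\bigl(\Theta(t\sqrt n)\bigr)$, an extra additive $\Theta\bigl(N(t,n)\log n\bigr)$. So the improvement down to $\log(t+1)$ has to come from amortising the ``expansion cost'' across ranks — each unit of spread the labels are pushed into by the growth of the ranks is paid for only once — which I expect requires tracking the joint behaviour of the values along a whole rank, in the spirit of the entropy/container arguments used for $B_n$, rather than any element-wise or chain-wise estimate. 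That amortisation is where I would expect the real work to lie.
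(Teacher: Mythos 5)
Your reduction is correct and cleanly stated: antichains of $[t]^n$ biject with order ideals, and slicing $[t]^n=[t]^{n-1}\times C_t$ along the last coordinate gives a nested chain of $t$ ideals of $[t]^{n-1}$, hence $\alpha([t]^n)=\Omega_{[t]^{n-1}}(t+1)$, the number of order-preserving maps $[t]^{n-1}\to\{0,\dots,t\}$. Your sanity checks for $n=1,2$ are right, and you correctly diagnose that the element-by-element bound $(t+1)^{t^{n-1}}$ is useless for $t\lesssim\sqrt n$, while the symmetric-chain/Dilworth bound $\prod_C(|C|+1)$ carries an extra $\Theta(N\log n)$ in the exponent. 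But the argument then stops at exactly the decisive point: the step that would turn the rank-by-rank factors $\prod_{x\in L_j}(t+1-M(x))$ into a total of $N(t,n)\log(t+1)$ is not carried out, and you say so yourself (``that amortisation is where I would expect the real work to lie''). As written this is a plan for a proof, not a proof; the inequality has not been established. (For what it is worth, the paper does not contain a proof of this theorem either --- it is quoted from Tsai --- so there is nothing internal to compare against.)

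One concrete warning about the proposed route: no chainwise version of the amortisation can close the gap. If you take a symmetric chain decomposition of $[t]^{n-1}$ into chains $C_1,\dots,C_{N(t,n-1)}$ with lengths $\ell_1,\dots,\ell_{N(t,n-1)}$ and bound $f$ chain by chain, you get
\begin{equation*}
\Omega_{[t]^{n-1}}(t+1)\le\prod_j\binom{\ell_j+t+1}{t},
\end{equation*}
and the symmetric-chain product rule gives $N(t,n)=\sum_j\bigl(\min(\ell_j,t-1)+1\bigr)$. The per-chain inequality one would need, $\binom{\ell+t+1}{t}\le(t+1)^{\min(\ell,t-1)+1}$, is false: for fixed $t$ and $\ell\to\infty$ the left side is unbounded while the right side is stuck at $(t+1)^t$. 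So the compatibility constraints \emph{between} chains (equivalently, between ranks) --- precisely the information a chainwise or naive rankwise product throws away --- are essential and must be used quantitatively. That missing quantitative step is the actual content of Tsai's theorem, and it is absent from your proposal.
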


\noindent Recently, Pohoata and Zakharov \cite{Pohoata2021} used the graph container method to treat the case of constant $t$, which yields better dependency on $n$ (but dependency unspecified on $t$).

    \begin{theorem}[\cite{Pohoata2021}] \label{thm:PZ}
        $$\log\al([t]^n)\leq \left(1+C_t\frac{\log n}{\sqrt{n}}\right)N(t,n),$$
        where $C_t>0$ is a constant depending solely on $t$.
    \end{theorem}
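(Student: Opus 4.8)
\medskip
\noindent\textbf{Proof proposal.} The plan is to combine the graph container method with two classical facts about products of chains. Write $L_k$ for the $k$-th level of $[t]^n$ and $m$ for a central level, so $|L_m|=N(t,n)$. First, by the theorem of de~Bruijn--Tengbergen--Kruyswijk, $[t]^n$ has a symmetric chain decomposition into exactly $N(t,n)$ chains; hence its width is $N(t,n)$, so \emph{every} antichain of $[t]^n$ (or of any induced subposet) has at most $N(t,n)$ elements. Second, the numbers $|L_k|$ are the coefficients of $(1+x+\cdots+x^{t-1})^n$, so a local central limit theorem with variance $n(t^2-1)/12$ applies; together with \eqref{eq_MR} this yields $|L_{m+r}|\le N(t,n)\,e^{-\Omega_t(r^2/n)}$ for every $r$. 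Fix a half-width $w=\Theta_t\!\big(\sqrt{n\log n}\big)$ large enough that $|L_{m\pm w}|\le n^{-3}N(t,n)$, set $W=\bigcup_{|k-m|\le w}L_k$, and split an arbitrary antichain $A$ of $[t]^n$ as $A=A_W\sqcup A'$ with $A_W=A\cap W$ and $A'$ the part living in the remaining (``tail'') levels.

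For the tail part, $A'$ is an antichain of the subposet induced on the tail levels, so the LYM inequality --- valid for products of chains --- gives $\sum_k|A'\cap L_k|/|L_k|\le 1$; since $|L_k|\le n^{-3}N(t,n)$ for every tail level $k$, this forces $|A'|\le n^{-3}N(t,n)$. Writing $\al_W$ for the number of antichains of $[t]^n$ using only levels of $W$, and noting that every antichain is recovered from its tail part together with a compatible antichain of $W$, we obtain
\[
\al([t]^n)\ \le\ \al_W\cdot\binom{t^n}{\le n^{-3}N(t,n)}\ =\ \al_W\cdot\exp_2\!\Big(O_t\big(n^{-3}N(t,n)\log n\big)\Big),
\]
using $t^n/N(t,n)=\Theta_t(\sqrt n)$; the second factor is $\exp_2\!\big(o_t(N(t,n)\log n/\sqrt n)\big)$ and is harmless.

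It remains to bound $\al_W$ --- the number of independent sets of the comparability graph $G_W$ of $W$ --- by the graph container method (in the Kleitman--Winston form). The structural inputs are that, for each $|r|\le w$, the bipartite comparability graph between consecutive levels $L_{m+r}$ and $L_{m+r+1}$ is nearly regular of degree $\Theta_t(n)$ (the number of coordinates of a central-level vector that one can raise, or lower, is $\Theta_t(n)$) and has all codegrees $O_t(1)$ (two same-level vectors with a common cover differ in essentially one coordinate); these give a supersaturation estimate, a subset of $W$ of size $(1+\eps)N(t,n)$ spanning a number of comparable pairs growing with $\eps$ and $n$, which is what drives the container algorithm to prune. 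The outcome is a family $\mathcal{C}$ of subsets of $W$ with $\log|\mathcal{C}|=O_t\!\big(N(t,n)\log n/\sqrt n\big)$, each of size $|C|\le\big(1+O_t(\log n/\sqrt n)\big)N(t,n)$, such that every antichain of $W$ lies in some $C\in\mathcal{C}$. Here the $\sqrt n$ is forced: $|W|=\sum_{|r|\le w}|L_{m+r}|=\Theta_t\!\big(N(t,n)\sqrt n\big)$ because $\sum_r e^{-\Omega_t(r^2/n)}=\Theta_t(\sqrt n)$, so the band carries a factor $\Theta_t(\sqrt n)$ more vertices than the target container size $N(t,n)$, and pruning this excess (with $\log(|W|/f)=\Theta_t(\log n)$ for the relevant fingerprint size $f=\Theta_t(N(t,n)/\sqrt n)$) costs a $\Theta_t(\log n/\sqrt n)$-factor in the exponent --- whereas a single hypercube-type bipartite slab costs only $\Theta(\log n/n)$. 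Consequently $\al_W\le|\mathcal{C}|\cdot 2^{(1+O_t(\log n/\sqrt n))N(t,n)}=\exp_2\!\big((1+O_t(\log n/\sqrt n))N(t,n)\big)$, which combined with the tail bound gives the theorem.

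The step I expect to be the real obstacle is the container argument for $G_W$. Unlike a single bipartite slab, $G_W$ is far from regular (level sizes vary by polynomial factors across the band) and carries comparabilities between far-apart levels, so one must establish a supersaturation / minimum-degree condition uniform over all sufficiently large subsets of $W$ --- or run an iterated peeling scheme --- carefully enough to obtain containers of size $(1+O_t(\log n/\sqrt n))N(t,n)$ while keeping $\log|\mathcal{C}|=O_t(N(t,n)\log n/\sqrt n)$. This balancing, together with the fact that none of the degree, codegree, or band-width estimates has been optimized in $t$, is where the unspecified constant $C_t$ is spent.
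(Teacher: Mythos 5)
This is a cited result: the paper states Theorem~\ref{thm:PZ} with attribution to \cite{Pohoata2021} and gives no proof, so there is no in-paper argument to compare against. Judged on its own terms, your blueprint is consistent with the paper's one-line characterization of \cite{Pohoata2021} as a graph-container proof, and the peripheral steps are sound: the width of $[t]^n$ is $N(t,n)$ by the de~Bruijn--Tengbergen--Kruyswijk symmetric chain decomposition; the Gaussian decay $|L_{m+r}|\le N(t,n)e^{-\Omega_t(r^2/n)}$ is standard; LYM does hold for $[t]^n$ (it has the normalized matching property), so the tail antichain has size at most $n^{-3}N(t,n)$; and the resulting factor $\binom{t^n}{\le n^{-3}N(t,n)}=\exp_2(O_t(N(t,n)\log n/n^2))$ is indeed negligible. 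The back-of-envelope for why the exponent loss is $\Theta_t(\log n/\sqrt n)$ --- $|W|=\Theta_t(N\sqrt n)$ versus target size $N$, so $\log(|W|/f)=\Theta_t(\log n)$ --- is also the right heuristic.

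The genuine gap is exactly where you flag it, and it cannot be waved past: the container step for $G_W$ carries essentially all the content of the theorem and is asserted, not proven. You would need (i) a quantitative supersaturation statement for $G_W$ --- any $S\subseteq W$ with $|S|\ge(1+\eps)N(t,n)$ spans many comparable pairs, uniformly down to $\eps\asymp\log n/\sqrt n$ --- and (ii) a container scheme calibrated so that fingerprints of size $q\asymp N/\sqrt n$ yield containers of size $(1+O_t(\log n/\sqrt n))N$. Neither follows from the local facts you cite (degree $\Theta_t(n)$ and codegree $O_t(1)$ between \emph{adjacent} levels): $G_W$ has comparabilities across distant levels, level sizes varying by polynomial factors within the band, and a supersaturation bound that must beat Dilworth's theorem quantitatively --- a nontrivial structural input specific to products of chains. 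So what you have is a plausible and correctly scoped outline, with the decisive lemma (the one that, as you note, must also absorb the unspecified $t$-dependence in $C_t$) left as a black box.
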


	\subsection{Our results} Our first result is an upper bound on  $\log\alpha([3]^n)$ with a better dependency on $n$.
	\begin{theorem}\label{thm:3^n} For any integer $n \ge 1$,
		$$\log\al([3]^n)\leq \left(1+\frac{4\log 3}{n}\right)N(3,n).$$
	\end{theorem}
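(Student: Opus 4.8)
The plan is to count order ideals (equivalently, antichains) of $P:=[3]^n$ by slicing along its unique largest level $L_n$. Write $a_k:=|L_k|$, so that $a_n=N(3,n)$, and recall two structural facts: $P$ has the normalized matching property (de Bruijn--Tengbergen--Kruyswijk), and $P$ carries the order-reversing involution $\iota\colon x\mapsto(2,\dots,2)-x$, which fixes $L_n$ setwise. I would first record a gluing identity: every ideal $I$ of $P$ restricts to an ideal $I^{-}$ of $P_{\le n}:=L_0\cup\dots\cup L_n$ and an ideal $I^{+}$ of $P_{\ge n}:=L_n\cup\dots\cup L_{2n}$ with the common trace $S:=I\cap L_n$, and conversely any two such ideals sharing a trace on $L_n$ glue back to an ideal of $P$ (using that every element of $P$ lies on a maximal chain through $L_n$). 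Hence
\[
\alpha([3]^n)=\sum_{S\subseteq L_n} d(S)\,u(S),
\]
where $u(S)$ (resp.\ $d(S)$) counts the ideals of $P_{\ge n}$ (resp.\ $P_{\le n}$) whose trace on $L_n$ is $S$; moreover $\iota$ yields $d(S)=u(\tau S)$ for the involution $\tau(S):=L_n\setminus\iota(S)$, an anti-automorphism of the Boolean lattice $2^{L_n}$, and $u$ is monotone in $S$.

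The heart of the matter is a sufficiently strong bound on the number of ideals of $P_{\ge n}$ with a prescribed trace. An ideal of $P_{\ge n}$ with trace $S$ is $S$ together with an ideal of $P_{>n}$ whose shadow into $L_n$ lies in $S$; pushing upward, its trace on $L_{n+j}$ lies in the $j$-fold upward closure $\Gamma^{j}(S)\subseteq L_{n+j}$. The normalized matching property, applied level by level together with the level-ratios $a_{n+j}/a_{n+j-1}$ near the middle — where $a_{n+1}/a_n=1-\Theta(1/n)$ and $a_{n+j}/a_n$ falls off roughly like $e^{-3j^2/(4n)}$ — controls the combined size of these iterated closures. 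What one really needs to extract, though, is not a pointwise estimate on $u(S)$ but a statement about how few traces admit many extensions: since the lower shadow of any element of $L_{n+1}$ has $\Theta(n)$ elements, a trace $S$ whose upward closure is sizeable must contain entire lower shadows of many elements of $L_{n+1}$, which forces $S$ to agree with all of $L_n$ outside an $O(N(3,n)/n)$-sized piece. Equivalently, one produces a small family of ``containers'' — supersets of ideals, each determined by such a short fingerprint — covering all ideals, and bounds $\alpha([3]^n)$ by the number of containers times $2^{M}$, with $M$ the maximal size of a container.

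I expect the resulting estimate to take the shape $\alpha([3]^n)\le 2^{N(3,n)}\cdot 3^{\,O(N(3,n)/n)}$, where the factor $3$ records the three coordinate-values available one step above $L_n$ and the exponent $O(N(3,n)/n)$ comes from the $1-\Theta(1/n)$ gap between $a_n$ and its neighbours; tracking the constants through the shadow estimates and the container count then yields the exponent $4\log 3\cdot N(3,n)/n$, i.e.\ $\log\alpha([3]^n)\le(1+4\log 3/n)N(3,n)$.

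The main obstacle is precisely this collective control. Bounding each $u(S)$ in isolation and summing $\binom{a_n}{|S|}\,d(S)u(S)$ over $|S|$ is hopeless: the traces maximizing $u$ and those maximizing $d$ lie in incompatible positions, so this overcounts by an exponential factor. One must instead argue that the ideals possessing many co-traced siblings are genuinely few, i.e.\ that a short fingerprint pins each of them down, and — this is where the real work is — that the fingerprint has length $O(N(3,n)/n)$ rather than $O(N(3,n)\log n/n)$ (which a cruder shadow argument, or counting antichains through a symmetric chain decomposition of $[3]^n$, would give). This refinement, powered by the $\Theta(n)$-sized shadows and the $1-\Theta(1/n)$ level ratios special to $[3]^n$, is what separates the $t=3$ case from the general bound of Theorem~\ref{thm:CCT} and from Theorem~\ref{thm:PZ}.
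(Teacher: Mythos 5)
Your proposal identifies the right decomposition (slicing along the middle layer $L_n$, gluing an ideal of the bottom half to an ideal of the top half along their common trace on $L_n$, and exploiting the order-reversing involution), but the heart of the argument — the claim that a container/fingerprint scheme pins each ideal down with a fingerprint of length $O(N(3,n)/n)$, and that tracing constants yields exactly $4\log 3\cdot N(3,n)/n$ — is asserted, not proved. You yourself flag this as ``where the real work is,'' and then the work is not done: there is no construction of the containers, no bound on their number, no derivation of the constant $4\log 3$, and no explanation of why the usual $\log n$ losses of a container argument should disappear here. Indeed, the container route is precisely what Pohoata and Zakharov used (Theorem~\ref{thm:PZ}), and what it is known to deliver for $[t]^n$ is $1+C_t\log n/\sqrt n$, not $1+O(1/n)$; nothing in your sketch explains how the $t=3$ structure would improve a container argument by two orders of magnitude in the error term. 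The remark that ``$\Theta(n)$-sized shadows and $1-\Theta(1/n)$ level ratios'' are special to $[3]^n$ is not accurate either — these asymptotics hold for every fixed $t$ — so they cannot by themselves distinguish $t=3$.

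For contrast, the paper does not use containers at all. It proves a refined Kahn-style entropy inequality for the weighted independence polynomial of a bipartite graph (Theorem~\ref{thm:two_level}), propagates it through all levels of a graded poset by induction (Theorem~\ref{thm:multi_level}), and then closes the recursion for $[3]^n$ using the specific structural inequality $d_{N^{i-1}(x)}-d^{i-1}(x)\ge n-i$ (equation~\eqref{3n.special}), which bounds $\alpha(\bar P_Y)$ for any $Y\subseteq P_n$ by $2^{|M(Y)|(1+2\log 3/n)}$ (Lemma~\ref{prop:bottom}). The final bound $1+4\log 3/n$ arises by applying that lemma once to the bottom half and once to the full top half; the factor $3$ comes from $2^{d_{N^{n-1}(v)}}+2^{d^{n-1}(v)(1+1/d_{N^{n-1}(v)})}\le 2^{d_{N^{n-1}(v)}}\cdot 3$, not from ``three coordinate-values above $L_n$.'' If you want to pursue the container route you would need, at minimum, a concrete fingerprint construction together with a proof that the number of fingerprints is $2^{O(N/n)}$ with no stray $\log n$ — a nontrivial refinement that your sketch does not supply and that, as far as the literature stands, no one has carried out.
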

 
	\noindent We remark that our proof almost identically applies to $\alpha(B_n)$ to remove the logarithmic factor in \eqref{ineq_KM}. The proof of Theorem \ref{thm:3^n} is inspired by Kahn's entropy approach in \cite{Kahn2002Entropy} with a couple of points of modifications and improvements. Kahn's result applies under quite a strict condition on degrees of the vertices in the bipartite graph induced by two consecutive layers of $B_n$. However, for any $t>2$, two consecutive layers of $[t]^n$ do not satisfy this condition.	In our work, we obtain a refined version of Kahn's inequality for the independence polynomial of a bipartite graph (Theorem \ref{thm:two_level}). We then use it to upper bound the weighted sum of antichains of a graded poset (Theorem~\ref{thm:multi_level}). The seemingly complicated inequality becomes easier to handle in the case of equal weights, and analyzing it more carefully on $[3]^n$ allows us to obtain Theorem~\ref{thm:3^n}.

    Unfortunately, the proof of Theorem~\ref{thm:3^n} does not extend to $t>3$ because of the structural difference of the bipartite graphs formed by consecutive layers of $[t]^n$. However, it is not clear whether the inequality obtained from Theorem~\ref{thm:multi_level} itself fails.
    
Our second result concerns any $t$ and $n$, whenever  $n$ is large enough compared to $t$.

 	\begin{theorem}\label{thm:t^n} There exists an absolute constant $C$ such that for integers $t, n$ such that $1\le t<n/(100\log n)$,
		\beq{disp.t^n}\log \al([t]^n) \le \left(1+C\Bigl(\frac{t\log^3 n}{n}\Bigr)^{1/2}\right)N(t,n).\enq
	\end{theorem}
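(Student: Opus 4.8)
The plan is to follow the same strategy that underlies Theorem~\ref{thm:3^n}, but to organize the estimates so that they produce the weaker (though more general) bound \eqref{disp.t^n} for all $t$ in the stated range. Concretely, I would use Theorem~\ref{thm:multi_level} (the weighted antichain bound for a graded poset) applied to the poset $[t]^n$, with all weights equal to $1$, so that the weighted sum of antichains is exactly $\al([t]^n)$ (or at worst a $\exp_2(O(\text{lower-order}))$ factor times it, after accounting for the number of levels). The first step is thus to recall that $[t]^n$ is graded with $N:=(t-1)n+1$ levels, and that the largest level has size $N(t,n)$ with the asymptotics \eqref{eq_MR}; I will also need good control on how fast the level sizes decay away from the middle, i.e. an estimate of the form $|L_k|/N(t,n) \le \exp_2(-c\,(k-m)^2/(t^2 n))$ for levels $L_k$ at distance $|k-m|$ from the middle level $m$, which follows from a local central limit theorem for the sum of $n$ i.i.d.\ uniform variables on $\{0,\dots,t-1\}$.

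The second step is to feed the structural parameters of the bipartite graph $G_k$ formed by two consecutive levels $L_k, L_{k+1}$ of $[t]^n$ into the refined Kahn-type inequality of Theorem~\ref{thm:two_level}. The key quantities are the degrees: a vertex in $L_k$ has up-degree roughly $n$ (the number of coordinates one can increment, minus those already at $t-1$) and a vertex in $L_{k+1}$ has down-degree roughly $n$; more importantly, the bipartite graph is ``almost biregular'' in the sense that degrees are concentrated around their means, and co-degrees (the number of common neighbors of two vertices on the same side) are small. I would extract from Theorem~\ref{thm:two_level} a bound of the shape $\log(\text{independence polynomial of }G_k) \le (1 + \eta_k)\max(|L_k|,|L_{k+1}|)$ where the error term $\eta_k$ is controlled by the ratio $(\log(\text{degree}))/(\text{degree})$ together with the imbalance $\big||L_k| - |L_{k+1}|\big|/\min(|L_k|,|L_{k+1}|)$ and the spread of the degree distribution. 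The point of the regime $t < n/(100\log n)$ is precisely that the relevant degrees (of order $n/t$ or $n$, depending on which are the bottleneck near the middle) are large enough compared to $\log n$ for these error terms to be $o(1)$, and in fact $O((t\log^3 n / n)^{1/2})$ after one sums the contributions over the $O(\sqrt{t^2 n}\cdot\text{polylog})$ relevant middle levels.

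The third step is the summation/telescoping over levels. Theorem~\ref{thm:multi_level} will express $\log \al([t]^n)$ as (essentially) a maximum or a controlled sum over consecutive-level contributions; one restricts attention to the window of levels within distance $O(t\sqrt{n\log n})$ of the middle (outside this window the total size of all levels is already $o(N(t,n))$ by the tail estimate from step one, so their contribution to $\log\al$ is negligible — here one uses that $\al$ of a poset on $M$ elements is at most $2^M$), and inside the window each contributes an error factor $\exp_2(\eta_k \cdot N(t,n)(1+o(1)))$. Choosing the window width to balance the two sources of error — the ``per-level'' error $\eta_k \approx (t\log n/n)^{1/2}$ times the number of levels, versus the tail loss — yields the claimed exponent $C(t\log^3 n/n)^{1/2}$; the extra $\log n$ factors come from the window width being polylogarithmically larger than the $\sqrt{n}$ scale and from the $\log n$ inside $\eta_k$.

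The main obstacle, I expect, is step two: verifying that the bipartite graphs $G_k$ between consecutive levels of $[t]^n$ genuinely satisfy the hypotheses of Theorem~\ref{thm:two_level} with good enough parameters, \emph{uniformly} over the whole middle window of levels. Unlike $B_n$, these graphs are neither biregular nor even close to it when $t \ge 3$ (the authors explicitly flag this as the reason the $[3]^n$ argument does not generalize), so the degrees range over an interval and one must check that the ``defect'' measured by Theorem~\ref{thm:two_level} is still controlled by $\log(\text{min-degree})/\text{min-degree}$ plus the imbalance term. Getting the min-degree lower bound right — it is of order $n/t$ near the extreme coordinates but the typical contributing structure has degree $\Theta(n)$ — and showing the imbalance $\big||L_k|-|L_{k+1}|\big|$ across consecutive middle levels is only $O(|L_k|\cdot |k-m|/(t^2 n))$, is where the $t$-dependence of the final bound is really pinned down, and is the step requiring the most care.
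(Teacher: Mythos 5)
Your proposal goes down a road that the paper explicitly flags as an open problem, and the gap you acknowledge at the end is exactly the one that would need to be filled. The paper's actual proof of Theorem~\ref{thm:t^n} does \emph{not} go through Theorem~\ref{thm:multi_level} or Theorem~\ref{thm:two_level} at all. After proving Theorem~\ref{thm:3^n} the authors write that the $[3]^n$ argument ``does not extend to $t>3$ because of the structural difference of the bipartite graphs formed by consecutive layers of $[t]^n$,'' and that it is not even clear whether the inequality obtained from Theorem~\ref{thm:multi_level} fails for $t>3$. The crucial input to the $[3]^n$ calculation is the special inequality $d_{N^{i-1}(x)} - d^{i-1}(x) \ge n - i$ for all $x \in P_i$ (display \eqref{3n.special}), which is what makes the induction in Lemma~\ref{prop:bottom} close; this inequality is genuinely false for $t \ge 4$, and there is no known substitute. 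Your plan to control things via ``$\log(\text{min-degree})/\text{min-degree}$ plus imbalance'' is exactly the quantity that blows up without something like \eqref{3n.special}: in the $f_P$ recursion the error does not telescope level-by-level as a sum, it compounds multiplicatively through the nested expression $\bigl((1+\lam_k)^{d_{N^{k-1}(v)}}+f_{P_{<v}}-1\bigr)^{1/d_{N^{k-1}(v)}}$, and a per-level defect of order $(\log n)/n$ accumulated across $\Theta(tn)$ levels without the telescoping that \eqref{3n.special} provides does not give the claimed bound.

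What the paper actually does for Theorem~\ref{thm:t^n} is entirely different: it adapts Pippenger's entropy method via chain partitions. One fixes a chain decomposition $\cC$ of $[t]^n$ into $N(t,n)$ saturated chains (Proposition~\ref{GK_extend}), writes $\log\alpha([t]^n)=H(\bff)$ for a uniformly random monotone function $\bff$, introduces the indicator process $\tilde\bY_j = \mathbf y_j\gamma_j(\bff)$ that exposes the threshold of $\bff$ on a random subset of chains (all low chains plus a $p$-fraction of high chains, $p=(t\log((t-1)n)/n)^{1/2}$), and a correction variable $\hat\bY_j$, and then bounds $H(\tilde\bY)$ and $H(\hat\bY)$ separately using Pippenger's Lemma~\ref{lem:pip}, a Chernoff bound on the number of low points, and a symmetrization argument over $\operatorname{Stab}(x)\subseteq S_n$ to control $\pr(\bff(y)=1\mid x \text{ light})$. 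The $O((t\log^3 n/n)^{1/2})$ error comes from the choice of $p$ and the threshold $s=p^{-1}\log(\cdot)$ in the heavy/light dichotomy, not from summing consecutive-layer errors. So your proposal is not a variant of the paper's proof; it is an attempt at the approach the paper abandons, and it leaves the central obstacle unresolved.

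Two smaller structural misconceptions in the proposal are worth noting. First, Theorem~\ref{thm:multi_level} does not express $\log\alpha(P)$ as ``a maximum or controlled sum over consecutive-level contributions''; $f_P$ is a nested recursion and restricting to a middle window of levels is not a move it readily accommodates, because $f_{P_{<v}}$ depends on the entire downset below $v$. Second, Theorem~\ref{thm:two_level} does not have a hypothesis or error term depending on the imbalance $\bigl||L_k|-|L_{k+1}|\bigr|$ or co-degrees; the only structural parameter it uses is $d_{N(v)}=\min\{d(u):u\in N(v)\}$, and the difficulty for $t\ge 4$ is that $d_{N(v)}$ can be much smaller than $d(v)$ in a way that \eqref{3n.special} controls for $t=3$ but not in general.
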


\nin We note that the constant 100 in the restriction on $t$ is not important -- we can make it any constant by choosing $C$ appropriately. In the proof, for example, we will see that the choice of 100 suggests $C \approx 15$.

 The proof of Theorem \ref{thm:t^n}  adapts the beautiful entropy approach in \cite{Pippenger1999EntropyAE}: with $\bff$ a random monotone Boolean function on $[t]^n$ chosen uniformly at random, we have
\[\log\alpha([t]^n)=H(\bff)\,,\]
where $H(\cdot)$ is the binary entropy function. (See Section \ref{sec.entropy} for entropy  basics.) To bound $H(\bff)$, we define a series of auxiliary random variables that determine $\bff$, and will bound the entropy of those auxiliary random variables instead. This approach builds on that of \cite{Pippenger1999EntropyAE,Carroll2012CountingAA}, but our case analyses are done in a more careful way.

\subsection{Connection to a Ramsey-type problem}\label{sec.Ramsey}

Antichains of $[t]^n$ are closely related to another interesting combinatorial problem that we briefly discuss in this section. Following \cite{Fox2011ErdsSzekerestypeTF}, for any sequence of positive integers $j_1<j_2<\ldots<j_l$, we say that the $k$-tuples $(j_i,j_{i+1}, \ldots, j_{i+k-1})$ $(i=1,2,\ldots,l-k+1)$ form a \textit{monotone path} of length $l$. Let $M_k(n,t)$\footnote{We use $M_k(n,t)$ for the notation $N_k(q,n)$ in \cite{Fox2011ErdsSzekerestypeTF} to make it consistent with our main theorems.} be the smallest integer $M$ with the property that no matter how we color all $k$-element subset of $\{1,2,\ldots,M\}$ with $n$ colors, we can always find a monochromatic monotone path of length $t$. In this language, the celebrated results of Erd\H{o}s and Szekeres \cite{Erdos-Szekeres} can be written as $M_2(n,t)=(t-1)^n+1$ and $M_3(2,t)=\binom{2t-4}{t-2}+1$. Fox, Pach, Sudakov, and Suk \cite{Fox2011ErdsSzekerestypeTF} showed that
\[2^{(t/n)^{n-1}}\le M_3(n,t) \le 2^{t^{n-1}\log t} \quad \text{for $n \ge 2$ and $t \ge n+2$},\]
and suggested closing the gap between the lower and upper bound as an interesting question. Answering this question, Moshkovitz and Shapira \cite{Moshkovitz2012RamseyTI} proved the following result.
\begin{theorem}\cite{Moshkovitz2012RamseyTI} For every $t,n \ge 2$,
\[2^{\frac{2}{3}t^{n-1}/\sqrt n} \le M_3(n,t) \le 2^{2t^{n-1}}.\]
\end{theorem}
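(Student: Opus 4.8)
\medskip\noindent\textbf{Proof proposal.}
The plan is to route both inequalities through the number of antichains (equivalently, down-sets) of a box poset $[s]^n$ with $s$ of order $t$, following the stepping-up philosophy of \cite{Fox2011ErdsSzekerestypeTF,Moshkovitz2012RamseyTI}. The $n=2$ case points the way: the Erd\H{o}s--Szekeres ``cups--caps'' theorem gives $M_3(2,t)=\binom{2t-4}{t-2}+1=\al([t-2]^2)+1$, so one expects $M_3(n,t)$ to be controlled by $\al([s]^n)$ with $s=t-O(1)$. The bridge I would set up is an Erd\H{o}s--Szekeres-type encoding of an $n$-coloring $\chi$ of $\binom{[N]}{3}$ with no monochromatic monotone path of length $t$: to each ordered pair $i<j$ assign the vector $a(i,j)\in\{2,\dots,s\}^n$ whose $c$-th coordinate is the length of a longest $c$-monochromatic monotone path of $\chi$ whose two largest terms are $i$ and $j$ (these lie below $s<t$ because no such path reaches length $t$). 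The one-step inequality $a(j,j')_c\ge a(i,j)_c+1$ for $i<j<j'$ with $c=\chi(\{i,j,j'\})$ should make $\chi$ recoverable from the data $(a(\cdot,j))_{j\in[N]}$ and identify the possible such data, up to trivial bookkeeping, with the down-sets of the box $[s]^n$. Granting the correspondence, the theorem reduces to the two-sided estimate $2^{N(s,n)}\le\al([s]^n)\le 2^{2t^{n-1}}$, the left half being trivial since every subset of a middle layer of $[s]^n$ is an antichain.

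For the upper bound the encoding gives $M_3(n,t)\le\al([s]^n)+O(1)$ with $s\le t$, so it suffices to prove $\log\al([s]^n)\le 2s^{n-1}$. I would not route this through \eqref{eq.CCT}, Tsai's bound~\cite{Tsai2019} $\log\al([s]^n)\le N(s,n)\log(s+1)$, or Theorem~\ref{thm:t^n}: each of those carries a spurious $\log s$ or $\mathrm{poly}\log n$ factor, or restricts the range of $s$, so none of them gives the clean constant $2$ for all $(s,n)$. Instead, count down-sets by a one-line induction on the dimension. Writing $[s]^{m+1}=[s]^m\times[s]$, a down-set of $[s]^{m+1}$ is precisely a weakly decreasing chain of $s$ down-sets of $[s]^m$; hence, with $P(m)$ the number of down-sets of $[s]^{m+1}$, one has $P(m)\le P(m-1)^s$, and since $P(1)=\binom{2s}{s}\le 4^s$, iterating gives $\log_2\al([s]^n)=\log_2 P(n-1)\le 2s^{n-1}\le 2t^{n-1}$.

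For the lower bound I would invert the encoding --- the standard stepping-up construction --- to realize an admissible coloring on $\asymp\al([s]^n)$ points, and then bound $\al([s]^n)$ from below. When $t$ is large relative to $n$ the trivial $\al([s]^n)\ge 2^{N(s,n)}$ already suffices: by \eqref{eq_MR} together with $\sqrt{s^2-1}\le s$ one gets $N(s,n)\ge(1-o(1))\sqrt{6/\pi}\cdot s^{n-1}/\sqrt n$, and since $\sqrt{6/\pi}>2/3$ there is room to absorb both the $(1-o(1))$ and the constant shift from $s$ to $t$. In the remaining range ($t$ comparable to $n$, where $\log\al([s]^n)$ exceeds $N(s,n)$) and in the small cases (where the claimed bound is weak and the convention must be spelled out), I would carry out the stepping-up construction itself, seeded by an arbitrary antichain in a middle layer of an $(n-1)$-dimensional box, and check directly via \eqref{eq_MR} that the number of points produced has logarithm at least $\tfrac23 t^{n-1}/\sqrt n$.

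The real obstacle is the correspondence: making the encoding precise enough that (i) admissible colorings on distinct ground sets are genuinely separated by their encodings, and (ii) the admissible encodings are pinned to down-sets of a box whose side is of \emph{exactly} the order needed for the constants $\tfrac23$ and $2$ to come out (the $n=2$ identity fixes the ``$-2$'', but establishing the analogue for all $n$, in both directions, is where the work lies). The factor $\sqrt n$ separating the two bounds is not an artifact of the method but is built into it: it is precisely the gap between the trivial $2^{N(s,n)}$ and the elementary $2^{2s^{n-1}}$ for $\al([s]^n)$, i.e.\ between a middle layer of $[s]^n$ and a facet $[s]^{n-1}$ of it --- and closing it amounts to proving $\log\al([s]^n)=(1+o(1))N(s,n)$, which is exactly what Theorem~\ref{thm:t^n} delivers in the range $t\ll n/\log^3 n$; this is how the present paper connects to the Moshkovitz--Shapira problem.
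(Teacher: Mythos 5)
Note first that the paper does not prove this theorem: it quotes it from \cite{Moshkovitz2012RamseyTI} and records only that the cited proof runs through the identity $\alpha([t]^n)=M_3(n,t)-1$. There is therefore no in-paper argument to compare against; I am reviewing your sketch as a stand-alone reconstruction of the cited result.

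Your outline has the right skeleton (encode a coloring by an antichain of a box of side roughly $t$, then bound $\alpha$ from both sides), and the dimensional induction in the middle is genuinely correct and clean: writing a down-set of $[s]^{m+1}$ as a weakly decreasing chain of $s$ down-sets of $[s]^m$ gives $P(m)\le P(m-1)^s$, and with $P(1)=\binom{2s}{s}\le 4^s$ this iterates to $\log\alpha([s]^n)\le 2s^{n-1}$ for all $s,n$. But the proposal as a whole is not a proof. The encoding and its inverse --- the step that carries all the weight on both sides --- are left entirely unproved, and you flag this yourself as ``the real obstacle.'' On the lower bound you also lean on \eqref{eq_MR}, which is asymptotic in $n$ with an unquantified $o_n(1)$, while the claim is for every $t,n\ge 2$; the ``small cases'' clause does not actually dispose of this. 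The paper already contains the non-asymptotic ingredient you need: Lemma~\ref{lem.MS} gives $N(t,n)\ge \tfrac{2t^{n-1}}{3\sqrt n}$ for all $t,n\ge 1$, which together with $\alpha([t]^n)\ge 2^{N(t,n)}$ and the exact identity finishes the lower bound with no asymptotics. Finally, the off-by-two tension you notice at $n=2$ (Erd\H{o}s--Szekeres gives $M_3(2,t)-1=\binom{2t-4}{t-2}=\alpha([t-2]^2)$, whereas the paper quotes $\alpha([t]^n)=M_3(n,t)-1$) is real and must be resolved --- it is a convention issue about the ``length'' of a monotone path --- before the constants $\tfrac23$ and $2$ can be pinned down; your sketch leaves it open.
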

\nin The proof of the above theorem uses the fascinating relationship
\[\alpha([t]^n)=M_3(n,t)-1 \quad \text{for all }  t,n \ge 2\] 
that is also proved in \cite{Moshkovitz2012RamseyTI}.  
Moshkovitz and Shapira conjectured that
\[M_3(n,t)=2^{\Theta(t^{n-1}/\sqrt n)},\]
and more boldly, asked whether (as $n \rightarrow \infty$)
\beq{MS.bold} M_3(n,t)=2^{(1+o_n(1))N(t,n)}.\enq
Of course, the motivation of this bold question is the fascinating phenomenon proved by Kleitman and Markowsky that almost all antichains in the Boolean lattice are subsets of the middle layer(s). Our second main result, 
Theorem~\ref{thm:t^n}, confirms that \eqref{MS.bold} holds for $t,n$ with $t \ll \frac{n}{\log^3 n}$ as $n \rightarrow \infty$ ($t$ is not necessarily fixed).

\medskip

\nin \textit{Organization.} Section \ref{sec.entropy} collects basic properties of entropy. The two main theorems are proved in Sections \ref{sec.3^n} and \ref{sec.t^n}, respectively.

	\section{Entropy Basics}\label{sec.entropy}

    For $p\in [0,1]$, we denote by $H(p)=-p\log p-(1-p)\log(1-p)$ (where $0\log0:=0$) the \textit{binary entropy function}.    For a discrete {random variable} $X$, we define its \textit{entropy} as
    $$H(\bX)=\sum_x -p(x)\log p(x),$$
    where $p(x)=\pr(\bX=x)$. Note that by Jensen's inequality we get
    \beq{max.unif} H(\bX)\leq \log |\text{range}(\bX)| \quad \text{(equality holds iff $\bX$ is uniform on range($\bX$)).}\enq    The \textit{conditional entropy} of $\bX$ with respect to a discrete random variable $\bY$ is
    $$H(\bX|\bY=y)=\sum_x -p(x|y)\log p(x|y)$$
    and
    \beq{entropy.basic1} H(\bX|\bY)=\E(H(\bX|\bY=y))= \sum_y p(y)\sum_x -p(x|y)\log p(x|y),\enq
    where $p(x|y)=\pr(\bX=x|\bY=y)$. For the conditional entropy, we have
    $$H(\bX,\bY)=H(\bX)+H(\bY|\bX),$$
    and if $\bY$ determines $\bX$, as in $H(\bX|\bY) = 0$, then
    $$H(\bX)\leq H(\bY).$$
    Using the above, it is straightforward to prove the following:
    
    \begin{fact}[Subadditivity of entropy]
    For any random vector $\bX=(\bX_1,\dots,\bX_n)$ we have
    $$H(\bX)\leq H(\bX_1)+\dots+H(\bX_n).$$
    \end{fact}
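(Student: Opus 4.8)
The plan is to reduce the statement to a single principle — \emph{conditioning does not increase entropy} — and then telescope it with the chain rule $H(\bX,\bY)=H(\bX)+H(\bY\mid\bX)$ already recorded above.

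First I would establish the auxiliary inequality $H(\bX\mid\bY)\le H(\bX)$ for any two discrete random variables $\bX,\bY$. The function $\phi(z)=-z\log z$ on $[0,1]$ is concave, since $\phi''(z)=-1/(z\ln 2)<0$ on $(0,1]$ and $\phi$ is continuous at $0$. Hence, for each fixed value $x$ of $\bX$, Jensen's inequality applied to the weights $\{p(y)\}_y$ gives
\[\sum_y p(y)\,\phi\bigl(p(x\mid y)\bigr)\ \le\ \phi\Bigl(\sum_y p(y)\,p(x\mid y)\Bigr)\ =\ \phi\bigl(p(x)\bigr),\]
where the last equality is the law of total probability $\sum_y p(y)p(x\mid y)=p(x)$. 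Summing over all $x$ and comparing with the definition \eqref{entropy.basic1} of conditional entropy yields $H(\bX\mid\bY)\le H(\bX)$.

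With that in hand I would prove the Fact by induction on $n$, the case $n=1$ being trivial. For the inductive step, group the first $n-1$ coordinates and apply the chain rule with the role of $\bX$ played by the vector $(\bX_1,\dots,\bX_{n-1})$ and that of $\bY$ by $\bX_n$, so that $H(\bX)=H(\bX_1,\dots,\bX_{n-1})+H(\bX_n\mid\bX_1,\dots,\bX_{n-1})$. The first term is at most $H(\bX_1)+\dots+H(\bX_{n-1})$ by the induction hypothesis, and the second is at most $H(\bX_n)$ by the auxiliary inequality (taking $\bY=(\bX_1,\dots,\bX_{n-1})$); adding these proves the claim. There is no genuine obstacle here — the only real input is the concavity of $z\mapsto-z\log z$ together with Jensen, and everything else is bookkeeping. (One can even skip the induction: marginalizing shows $\sum_i H(\bX_i)=-\sum_x p(x)\log\prod_i p_i(x_i)$ with $p_i$ the marginal law of $\bX_i$, so $\sum_i H(\bX_i)-H(\bX)=\sum_x p(x)\log\frac{p(x)}{\prod_i p_i(x_i)}\ge 0$ by a single application of Jensen to the convex map $-\log$, since $\sum_x\prod_i p_i(x_i)=1$.)
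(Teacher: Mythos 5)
Your argument is correct. The paper states this Fact without proof, remarking only that it follows "using the above," and the route you take — chain rule plus the lemma that conditioning does not increase entropy, the latter proved by Jensen applied to the concave map $\phi(z)=-z\log z$ — is exactly the standard argument the authors have in mind; the one ingredient not explicitly listed among the paper's preceding facts, namely $H(\bX\mid\bY)\le H(\bX)$, is precisely what you supply. Your parenthetical alternative (recognizing $\sum_i H(\bX_i)-H(\bX)$ as a relative entropy and invoking Jensen once for $-\log$) is also sound and slightly slicker, since it bypasses the induction entirely.
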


    We will also need a celebrated inequality due to Shearer, which generalizes the subadditivity property of entropy.

	\begin{lemma}[\cite{Graham1986SomeIT}]\label{Sh}
		If $\bX=(\bX_1, \ldots, \bX_k)$ is a random vector and $\alpha:2^{[k]} \rightarrow \mathbb R^+$ satisfies
		$\sum_{A \ni i} \alpha_A\ge 1$ for all $i \in [k]$,
		then
		\[H(\bX)\le \sum_{A \subseteq [k]} \alpha_A H(\bX_A),\]
		where $\bX_A=(\bX_i:i \in A)$.
	\end{lemma}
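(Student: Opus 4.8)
The statement is the classical Shearer inequality, and the plan is to derive it from two facts that are immediate consequences of the properties listed above: the \emph{chain rule} and the fact that \emph{conditioning cannot increase entropy}. Iterating the identity $H(\bX,\bY)=H(\bX)+H(\bY\mid \bX)$ (and its version conditioned on an extra variable throughout) gives, for any ordering of the coordinates, the chain rule $H(\bX_1,\dots,\bX_m)=\sum_{j=1}^m H(\bX_j\mid \bX_1,\dots,\bX_{j-1})$. Combining the conditional form of subadditivity, $H(\bY,\mathbf{Z}\mid \bX)\le H(\bY\mid \bX)+H(\mathbf{Z}\mid \bX)$, with the chain rule $H(\bY,\mathbf{Z}\mid \bX)=H(\bY\mid \bX)+H(\mathbf{Z}\mid \bY,\bX)$ yields $H(\mathbf{Z}\mid \bY,\bX)\le H(\mathbf{Z}\mid \bX)$; iterating this, one gets $H(\bX_i\mid \bX_{B'})\le H(\bX_i\mid \bX_{B})$ whenever $B\sub B'\sub[k]$, where $\bX_B=(\bX_j:j\in B)$.

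First I would fix the natural order on $[k]$ and expand, by the chain rule,
\[H(\bX)=\sum_{i=1}^k H(\bX_i\mid \bX_{[i-1]}),\qquad [i-1]:=\{1,\dots,i-1\}.\]
Then, for each $A\sub[k]$ with elements $a_1<\dots<a_m$, the chain rule applied in the \emph{same} order gives
\[H(\bX_A)=\sum_{j=1}^m H\bigl(\bX_{a_j}\mid \bX_{a_1},\dots,\bX_{a_{j-1}}\bigr)=\sum_{i\in A}H\bigl(\bX_i\mid \bX_{A\cap[i-1]}\bigr).\]
Since $A\cap[i-1]\sub[i-1]$, the monotonicity statement above shows $H(\bX_i\mid \bX_{A\cap[i-1]})\ge H(\bX_i\mid \bX_{[i-1]})$ for every $i\in A$, and therefore $H(\bX_A)\ge\sum_{i\in A}H(\bX_i\mid \bX_{[i-1]})$.

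Finally I would multiply this inequality by the nonnegative number $\alpha_A$, sum over all $A\sub[k]$, and interchange the order of summation:
\[\sum_{A\sub[k]}\alpha_A H(\bX_A)\ \ge\ \sum_{A\sub[k]}\alpha_A\sum_{i\in A}H(\bX_i\mid \bX_{[i-1]})\ =\ \sum_{i=1}^k\Bigl(\sum_{A\ni i}\alpha_A\Bigr)H(\bX_i\mid \bX_{[i-1]}).\]
Since each term $H(\bX_i\mid \bX_{[i-1]})$ is nonnegative and $\sum_{A\ni i}\alpha_A\ge 1$ by hypothesis, the last sum is at least $\sum_{i=1}^k H(\bX_i\mid \bX_{[i-1]})=H(\bX)$, which is the asserted bound. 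I do not anticipate a genuine obstacle here: the only points requiring care are that the chain-rule expansion of $H(\bX_A)$ must use the same linear order as that of $H(\bX)$ (so that the conditioning set $A\cap[i-1]$ is contained in $[i-1]$, allowing the monotonicity step), and that replacing the coefficient $\sum_{A\ni i}\alpha_A$ by $1$ uses nonnegativity of conditional entropy.
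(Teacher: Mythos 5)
Your proof is correct and is essentially the same argument the paper uses (in its own derivation of Shearer's lemma): expand $H(\bX)$ and each $H(\bX_A)$ by the chain rule in a common linear order, apply monotonicity of conditional entropy under enlarging the conditioning set, and then swap the order of summation using $\sum_{A\ni i}\alpha_A\ge 1$ together with nonnegativity of conditional entropy. No gaps.
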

    \nin A derivation of the below elementary fact can be found in \cite{Kahn2002Entropy}.
    \begin{fact}\label{fact}
		\[H(\bX|\bX \ne 0)=\frac{H(\bX)-H(\pr(\bX=0))}{1-\pr(\bX=0)}.\]	
	\end{fact}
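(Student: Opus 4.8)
The plan is to introduce the Bernoulli indicator $\bI=\mathbf{1}[\bX\neq 0]$ and to exploit the chain rule $H(\bX,\bI)=H(\bI)+H(\bX\mid\bI)$ together with the fact that $\bI$ is a deterministic function of $\bX$. Since $\bI$ is determined by $\bX$ we have $H(\bX,\bI)=H(\bX)$, so it suffices to evaluate the right-hand side of the chain rule.

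First I would note that $\bI$ takes the value $0$ with probability $\pr(\bX=0)$ and the value $1$ with probability $1-\pr(\bX=0)$, so $H(\bI)=H(\pr(\bX=0))$, where on the right $H(\cdot)$ is the binary entropy function (using $H(p)=H(1-p)$). Next I would expand the conditional term via \eqref{entropy.basic1}:
\[
H(\bX\mid\bI)=\pr(\bX=0)\,H(\bX\mid\bX=0)+\bigl(1-\pr(\bX=0)\bigr)\,H(\bX\mid\bX\neq 0).
\]
The key point is that conditioned on $\{\bX=0\}$ the variable $\bX$ is constant, hence $H(\bX\mid\bX=0)=0$ and the first summand vanishes. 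Plugging these into $H(\bX)=H(\bI)+H(\bX\mid\bI)$ gives
\[
H(\bX)=H(\pr(\bX=0))+\bigl(1-\pr(\bX=0)\bigr)\,H(\bX\mid\bX\neq 0),
\]
and solving for $H(\bX\mid\bX\neq 0)$ yields the claimed identity.

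There is no genuine obstacle here; this is a routine entropy computation. The only care needed is the degenerate case $\pr(\bX=0)\in\{0,1\}$, which is vacuous or handled by the usual conventions ($0\log 0:=0$). As an alternative to the conceptual argument above, one can verify the identity by brute force: writing $q=\pr(\bX=0)$ and $p(x)=\pr(\bX=x)$, expand $H(\bX\mid\bX\neq 0)=-\sum_{x\neq 0}\frac{p(x)}{1-q}\log\frac{p(x)}{1-q}$, split off the $\log(1-q)$ term using $\sum_{x\neq 0}p(x)=1-q$, and compare with $H(\bX)=-q\log q-\sum_{x\neq 0}p(x)\log p(x)$ and the definition of $H(q)$; the indicator-plus-chain-rule route is simply the conceptual repackaging of this algebra.
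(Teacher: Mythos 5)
Your proof is correct. The paper itself does not reprove this fact---it simply cites \cite{Kahn2002Entropy} for a derivation---but your indicator-plus-chain-rule argument (using $H(\bX)=H(\bX,\bI)=H(\bI)+H(\bX\mid\bI)$ with $\bI=\mathbf{1}[\bX\neq 0]$, then noting $H(\bX\mid\bX=0)=0$) is the standard derivation and exactly what the cited reference has in mind.
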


				\begin{proposition}\label{prop2.4}
				For any set $B$, $\lambda:B \rightarrow [0,\infty)$, $\mathcal S \sub 2^B$, and probability distribution $p$ on $\mathcal S$,
				\beq{13}\sum_{S \in \mathcal S} p_S \Bigl(\log(1/p_S)+\sum_{x \in S}\log \lambda_x\Bigr)\le \log  \left( \sum_{S \in \mathcal S} \prod_{x \in S} \lambda_x\right).\enq
			\end{proposition}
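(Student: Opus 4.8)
The plan is to recognize inequality \eqref{13} as a statement of the Gibbs variational principle (equivalently, the non-negativity of relative entropy). First I would set $W=\sum_{S\in\mathcal S}\prod_{x\in S}\lambda_x$, assuming for now that $W>0$ and all $\lambda_x>0$; the degenerate cases (some $\lambda_x=0$, forcing the corresponding terms to be treated via the convention $0\log 0=0$, or $W=0$) can be handled separately by a limiting argument or by restricting $\mathcal S$ to sets avoiding the zero-weight coordinates. Define the probability distribution $q$ on $\mathcal S$ by $q_S = \frac1W\prod_{x\in S}\lambda_x$. Then $\log(1/p_S)+\sum_{x\in S}\log\lambda_x = \log(1/p_S) + \log(Wq_S) = \log W - \log(p_S/q_S)$.

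Next I would multiply by $p_S$ and sum: the left-hand side of \eqref{13} equals $\sum_S p_S\log W - \sum_S p_S\log(p_S/q_S) = \log W - D(p\,\|\,q)$, where $D(p\,\|\,q)=\sum_S p_S\log(p_S/q_S)\ge 0$ is the Kullback--Leibler divergence. Hence the left-hand side is at most $\log W = \log\bigl(\sum_{S}\prod_{x\in S}\lambda_x\bigr)$, which is exactly \eqref{13}. If one prefers to avoid naming relative entropy, the same one line is just Jensen's inequality applied to the concave function $\log$: $\sum_S p_S\log(q_S/p_S)\le \log\bigl(\sum_S p_S\cdot q_S/p_S\bigr)=\log\bigl(\sum_S q_S\bigr)=\log 1 = 0$, where the sum runs over $S$ with $p_S>0$.

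The only genuinely delicate point is bookkeeping around zeros, so I would be slightly careful there. If some $\lambda_x=0$, then for any $S\ni x$ the product $\prod_{y\in S}\lambda_y=0$ and the term $\sum_{y\in S}\log\lambda_y=-\infty$; on the left side such an $S$ contributes $p_S\cdot(-\infty)$, which is $-\infty$ if $p_S>0$ (so the inequality is trivial) and is $0$ if $p_S=0$ (so that $S$ may be dropped). Thus without loss of generality we may assume $p_S=0$ whenever $S$ contains a zero-weight element, and then replace $\mathcal S$ by $\mathcal S'=\{S\in\mathcal S: p_S>0\}$, on which all relevant $\lambda_x$ are strictly positive and $W\ge\sum_{S\in\mathcal S'}\prod_{x\in S}\lambda_x>0$; the argument above applies verbatim, and enlarging the sum on the right from $\mathcal S'$ back to $\mathcal S$ only helps. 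I expect essentially no other obstacle; the core of the proof is the single application of Jensen's inequality to $\log$.
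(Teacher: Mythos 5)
Your proof is correct and matches the paper's argument essentially verbatim: both define the tilted distribution $q_S=\prod_{x\in S}\lambda_x/W$, rewrite the left-hand side as $\log W+\sum_S p_S\log(q_S/p_S)$, and finish with Jensen's inequality (equivalently, non-negativity of KL divergence). Your extra bookkeeping around zero weights is sound but not something the paper bothers with.
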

			
			\begin{proof}
				Set $W=\sum_{S \in \mathcal S} \prod_{x \in S} \lambda_x$ and $q_S=\prod_{x \in S} \lambda_x/W$. Then the left-hand side of \eqref{13} is $\sum_{S \in \mathcal S} p_S \log (q_S/p_S)+\log W$. By Jensen's inequality, $\sum_{S \in \mathcal S} p_S \log (q_S/p_S)\leq \log\left(\sum_{S \in \mathcal S}q_S\right)=0$, from which the conclusion follows. \end{proof}

	\section{Proof of Theorem \ref{thm:3^n}}\label{sec.3^n}

	We start with an inequality for the independence polynomial of a bipartite graph, where we allow the vertices of one of the parts to have different activities. 
The format and the proof of the next theorem are inspired by \cite{Kahn2002Entropy}, but the irregularity of the degrees of $v \in [3]^n$ required more careful analyses and additional ideas.
	
	\begin{theorem}\label{thm:two_level} Let $G$ be a bipartite graph on $A \cup B$ with the weight for each vertex $x \in A \cup B$ defined to be $\lambda_x \ge 1$.
		For $v \in A$, let
		\[\dn{v}=\min\{d(u):u \in N(v)\}\]
		and assume that the weight on each $v \in A$ is
		\[\lambda_v \equiv \mu\,,\]
  for some $\mu$. Then
			\[\sum_{I \in \cI(G)} \prod_{x \in I} \lambda_x \le \prod_{v \in A} \Bigl[ (1+\mu)^{d_{N(v)}}+\prod_{u \in N(v)}(1+\lambda_u)-1\Bigr]^{1/d_{N(v)}}.\]
		\end{theorem}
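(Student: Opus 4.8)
The plan is to run an entropy argument in the spirit of Kahn's independent-set bound \cite{Kahn2002Entropy}. Put $W=\sum_{I\in\cI(G)}\prod_{x\in I}\lambda_x$ and let $\mathbf I$ be a random independent set with the weighted law $p_I=\prod_{x\in I}\lambda_x/W$; since $\log p_I=\sum_{x\in I}\log\lambda_x-\log W$, taking expectations gives $\log W=H(\mathbf I)+\E\bigl[\sum_{x\in\mathbf I}\log\lambda_x\bigr]$ (this is Proposition \ref{prop2.4} at equality). Hence it is enough to bound the right-hand side by $\sum_{v\in A}\frac1{d_{N(v)}}\log\bigl[(1+\mu)^{d_{N(v)}}+\prod_{u\in N(v)}(1+\lambda_u)-1\bigr]$ and exponentiate.

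Write $\mathbf I=\mathbf I_A\cup\mathbf I_B$ with $\mathbf I_A=\mathbf I\cap A$, $\mathbf I_B=\mathbf I\cap B$, and split $H(\mathbf I)=H(\mathbf I_B)+H(\mathbf I_A\mid\mathbf I_B)$. To $H(\mathbf I_B)$ I would apply Shearer's lemma (Lemma \ref{Sh}) with the cover $\{N(v):v\in A\}$ and weights $\alpha_{N(v)}=1/d_{N(v)}$: the hypothesis $\sum_{v\in N(u)}1/d_{N(v)}\ge1$ holds because $u\in N(v)$ forces $d(u)\ge d_{N(v)}$, so $1/d_{N(v)}\ge1/d(u)$ and there are $d(u)$ summands. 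This gives $H(\mathbf I_B)\le\sum_{v\in A}\frac1{d_{N(v)}}H(\mathbf I_{N(v)})$ where $\mathbf I_{N(v)}=\mathbf I\cap N(v)$. For the conditional term, subadditivity over $v\in A$ followed by "conditioning reduces entropy" (using that $\mathbf I_{N(v)}$ is a coordinate projection of $\mathbf I_B$) gives $H(\mathbf I_A\mid\mathbf I_B)\le\sum_{v\in A}H(\mathbf 1[v\in\mathbf I]\mid\mathbf I_{N(v)})$. For the weight term, $\sum_{u\in B}\pr(u\in\mathbf I)\log\lambda_u\le\sum_{v\in A}\frac1{d_{N(v)}}\sum_{u\in N(v)}\pr(u\in\mathbf I)\log\lambda_u$ by the same cover (here we use $\lambda_u\ge1$, so $\log\lambda_u\ge0$), while the $A$-part contributes exactly $\sum_{v\in A}\pr(v\in\mathbf I)\log\mu$. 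Adding up, $\log W\le\sum_{v\in A}\Phi_v$ with
\[\Phi_v=\frac{1}{d_{N(v)}}\Bigl(H(\mathbf I_{N(v)})+\E\bigl[\sum_{u\in\mathbf I_{N(v)}}\log\lambda_u\bigr]\Bigr)+H(\mathbf 1[v\in\mathbf I]\mid\mathbf I_{N(v)})+\pr(v\in\mathbf I)\log\mu.\]

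It remains to prove the per-vertex bound $\Phi_v\le\frac1d\log\bigl[(1+\mu)^d+\prod_{u\in N(v)}(1+\lambda_u)-1\bigr]$, $d:=d_{N(v)}$. Fix $v$, set $\mathbf K=\mathbf I_{N(v)}$, $\beta=\mathbf 1[v\in\mathbf I]$, and note $\beta=1\Rightarrow\mathbf K=\emptyset$. Conditioning on the indicator $\mathbf 1[\mathbf K=\emptyset]$ and writing $q=\pr(\mathbf K=\emptyset)$, $r=\pr(\beta=1\mid\mathbf K=\emptyset)$, a chain-rule computation gives
\[\Phi_v=\frac1d H(q)+\frac{1-q}{d}\Bigl(H(\mathbf K\mid\mathbf K\ne\emptyset)+\E\bigl[\sum_{u\in\mathbf K}\log\lambda_u\,\big|\,\mathbf K\ne\emptyset\bigr]\Bigr)+q\bigl(H(r)+r\log\mu\bigr).\]
Applying Proposition \ref{prop2.4} to the conditional law of $\mathbf K$ on $2^{N(v)}\setminus\{\emptyset\}$ bounds the middle bracket by $\log P$, $P:=\prod_{u\in N(v)}(1+\lambda_u)-1$; applying it to the law of $\beta$ on $\{\emptyset,\{v\}\}$ (or just maximizing) gives $H(r)+r\log\mu\le\log(1+\mu)$. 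With $Q:=(1+\mu)^d$ this yields
\[d\,\Phi_v\le H(q)+(1-q)\log P+q\log Q=q\log\frac{Q}{q}+(1-q)\log\frac{P}{1-q}\le\log(Q+P)\]
by concavity of $\log$, which is exactly the per-vertex bound. Summing over $v\in A$ and exponentiating completes the proof.

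I expect the per-vertex step to be the main obstacle: one has to get the chain-rule bookkeeping after conditioning on $\mathbf 1[\mathbf K=\emptyset]$ exactly right (in particular, that the "$\mathbf K=\emptyset$" branch contributes the term $q(H(r)+r\log\mu)$ with no stray factor $1/d$), and to dispatch the degenerate cases $q\in\{0,1\}$ and $N(v)=\emptyset$; after that the closing inequality is just Jensen. It is also worth double-checking that the hypothesis $\lambda_x\ge1$ (hence $\mu\ge1$ and $\log\lambda_u\ge0$) is precisely what makes the cover-based redistribution of the weight term legitimate, since that is the only place positivity of the logarithms is used.
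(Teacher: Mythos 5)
Your proof is correct and, at the level of the key steps, it is the paper's argument: the same Shearer cover, the same use of Proposition~\ref{prop2.4}, and the same per-vertex optimization. Where you genuinely diverge is in the setup. The paper imports Kahn's auxiliary random variables $\bX_v,\bY$ from \cite{Kahn2002Entropy}, engineered so that $H(\bY)=\log Z$; the activities are thereby absorbed into a single entropy and the whole argument runs on $H(\bY)$ alone. You instead take the Gibbs measure $\pr(\mathbf I=I)\propto\prod_{x\in I}\lambda_x$ on $\cI(G)$ directly and use $\log Z=H(\mathbf I)+\E\sum_{x\in\mathbf I}\log\lambda_x$, tracking entropy and expected log-weight as separate ledgers. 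The entropy ledger is decomposed exactly as in the paper via $H(\mathbf I)=H(\mathbf I_B)+H(\mathbf I_A\mid\mathbf I_B)$ and Shearer with cover $\{N(v)\}_{v\in A}$, weights $1/d_{N(v)}$; the $B$-side weight ledger is redistributed over $A$ using the very same cover, and this is the one place where $\lambda_u\ge1$ (so $\log\lambda_u\ge0$) is used. Your per-vertex computation --- condition on $\mathbf 1[\mathbf K=\emptyset]$, bound the nonempty branch by $\log P$ via Proposition~\ref{prop2.4}, and close with Jensen's $q\log(Q/q)+(1-q)\log(P/(1-q))\le\log(Q+P)$ --- reproduces the paper's explicit maximization over $q_v$ in \eqref{11} exactly. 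Trade-offs: your version is more self-contained (no need to construct the auxiliary sets $S_v$ and weights $\alpha_v(s)$) and, as a small bonus, it makes visible that $\mu\ge1$ is never actually used on your route (only $\lambda_u\ge1$ for $u\in B$ is), since the $A$-weight enters only through $H(r)+r\log\mu\le\log(1+\mu)$, which holds for all $\mu>0$; the paper's packaging needs $\mu\ge1$ to build $\bX_v$ for $v\in A$. The degenerate cases you flag are indeed harmless: for $q\in\{0,1\}$ the Jensen step is replaced by $\log P\le\log(Q+P)$ or $\log Q\le\log(Q+P)$, and for $N(v)=\emptyset$ the corresponding factor in the stated product is $\bigl[(1+\mu)^{d_{N(v)}}+1-1\bigr]^{1/d_{N(v)}}=1+\mu$, which is exactly the multiplicative contribution of an isolated $v\in A$ to $Z$.
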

		
				We use the setup in \cite{Kahn2002Entropy} for our proof. For the time being, $G=(V, E)$ is an arbitrary graph and $\lambda:V \rightarrow [1,\infty)$ is an arbitrary assignment of weights to the vertices. Write $\mathcal I(G)$ for the collection of independent sets in $G$. Set
		\[Z=Z(G,\lambda)=\sum_{I \in \cI(G)} \prod_{x \in I} \lambda_x.\]

		\nin With each $v \in V$ we associate a set $S_v \ni 0$ and nonnegative weights $\alpha_v(s)$, $s \in S_v$, such that
		\[\alpha_v(0)=1, \sum_{s \ne 0} \alpha_v(s)=\lambda_v,\]
		and the r.v. $\bX_v$ given by $\pr(\bX_v=s)=\frac{\alpha_v(s)}{1+\lambda_v}$
		satisfies 
  \beq{eq.HXv} H(\bX_v)=\log(1+\lambda_v).\enq (As noted in \cite{Kahn2002Entropy}, this is possible iff $\lambda_v \ge 1$.) 
		We say that a vector $(s_v:v \in V) \in \prod S_v$ is \textit{independent} if $\{v:s_v \ne 0\} \in \cI(G).$  
	    Finally, let $\bY=(\bY_v:v \in V)$ be chosen from the independent vectors in $\prod S_v$ with $\pr(\bY=(s_v)) \propto \prod \alpha_v(s_v)$. 
     It was proved  in \cite{Kahn2002Entropy} that
		\[ H(\bY)=\log Z.\]
		
		\begin{proof}[Proof of Theorem \ref{thm:two_level}]
			
			Let $G$ and the values $\lambda_x$ be as in the statement of the theorem, and $\bY$ as above. We must show that
			\beq{9}H(\bY) \le \sum_{v \in A} \frac{1}{\dn{v}} \log \Bigl[(1+\mu)^{\dn{v}}+\prod_{u \in N(v)} (1+\lambda_u)-1\Bigr].\enq
			Denote by $Q_v$ the event $\{\bY_w=0 \; \forall w \in N(v)\}$, and set $q_v=\pr(Q_v)$. By the definition of $\bY$, we have
   \beq{XY} H(\bY_v|Q_v)=H(\bX_v).\enq
			
			Writing $\bY_W$ for $(\bY_w:w \in W)$, we have, using Lemma \ref{Sh},
			\beq{Thm3.1.1} \begin{split}H(\bY) &=H(\bY_A|\bY_B)+H(\bY_B)\\
				& \le \sum_{v\in A} \left[H(\bY_v|\bY_B)+\frac{1}{d_{N(v)}}H(\bY_{N(v)})\right].
			\end{split}\enq

			The entropy terms in the above sum are
			\beq{Thm3.1.2} H(\bY_v|\bY_B)\stackrel{\eqref{entropy.basic1}}{=} q_v H(\bY_v|Q_v)\stackrel{\eqref{eq.HXv}, \eqref{XY}}{=}q_v\log(1+\mu)\enq
			and (with $\mathbf{1}_A$ the indicator of $A$)
			\beq{Thm3.1.3}H(\bY_{N(v)})=H(\bY_{N(v)}, \mathbf{1}_{Q_v})=H(\mathbf{1}_{Q_v})+H(\bY_{N(v)}|\mathbf{1}_{Q_v})=H(q_v)+(1-q_v)H(\bY_{N(v)}|\ov{Q_v}).\enq

   \begin{claim}
       For any $v \in A$,
       			\beq{10} H(\bY_{N(v)}|\ov{Q_v})\le \log\left(\prod_{u \in N(v)}(1+\lambda_u)-1\right).\enq
   \end{claim}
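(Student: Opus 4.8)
The plan is to prove \eqref{10} by applying the Jensen-type estimate behind Proposition~\ref{prop2.4} directly to the \emph{conditional} law of $\bY_{N(v)}$ given $\ov{Q_v}$ (rather than deconditioning first), with a judicious choice of weights. Conditioned on $\ov{Q_v}$, the vector $\bY_{N(v)}=(\bY_w:w\in N(v))$ takes values among the tuples $(s_w)_{w\in N(v)}\in\prod_{w\in N(v)}S_w$ that are not identically $0$; I encode such a tuple by the set $\{(w,s_w): s_w\neq 0\}$ of its nonzero entries and weight it by $\prod_{w\in N(v)}\alpha_w(s_w)$. Since $\sum_{s\in S_w}\alpha_w(s)=1+\lambda_w$ and the all-zero tuple carries weight $1$, the total weight of the admissible tuples is $\prod_{u\in N(v)}(1+\lambda_u)-1$, so Proposition~\ref{prop2.4} (applied with ground set $\bigcup_{w\in N(v)}(S_w\setminus\{0\})$, weights $\alpha_w(\cdot)$, and $p$ the conditional law above) yields
\[H(\bY_{N(v)}\mid\ov{Q_v})\ \le\ \log\Bigl(\prod_{u\in N(v)}(1+\lambda_u)-1\Bigr)\ -\ \E\Bigl[\ \sum_{w\in N(v)}\log\alpha_w(\bY_w)\ \Bigm|\ \ov{Q_v}\ \Bigr].\]
It therefore suffices to prove that the correction term $\E\bigl[\sum_{w\in N(v)}\log\alpha_w(\bY_w)\mid\ov{Q_v}\bigr]$ is nonnegative.

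I would in fact show it vanishes. On the event $Q_v$ every $\bY_w$ with $w\in N(v)$ equals $0$ and $\alpha_w(0)=1$, so $\sum_{w\in N(v)}\log\alpha_w(\bY_w)=0$ there; hence $\E\bigl[\sum_{w\in N(v)}\log\alpha_w(\bY_w)\mathbf{1}_{\ov{Q_v}}\bigr]=\E\bigl[\sum_{w\in N(v)}\log\alpha_w(\bY_w)\bigr]=\sum_{w\in N(v)}\E[\log\alpha_w(\bY_w)]$. For fixed $w$, I split on $Q_w=\{\bY_{w'}=0\ \text{for all}\ w'\in N(w)\}$: on $\ov{Q_w}$ some neighbour of $w$ is nonzero, forcing $\bY_w=0$, so this part contributes $0$; and conditioned on $Q_w$ the variable $\bY_w$ has the law of $\bX_w$ (this is exactly the distributional fact underlying \eqref{XY}), while $\E[\log\alpha_w(\bX_w)]=0$ is just a restatement of the defining property \eqref{eq.HXv}, since $H(\bX_w)=\log(1+\lambda_w)-\E[\log\alpha_w(\bX_w)]$. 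Hence $\E[\log\alpha_w(\bY_w)]=0$ for every $w\in N(v)$, the correction term is $0$, and \eqref{10} follows.

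The step I expect to be the real obstacle is the choice of weighting together with the vanishing of the correction term. The crude bound $H(\bY_{N(v)}\mid\ov{Q_v})\le\log\bigl(\prod_{w\in N(v)}|S_w|-1\bigr)$ is too weak, because when $\lambda_w$ is not an integer one cannot take $\bX_w$ uniform and $S_w$ is strictly larger than $1+\lambda_w$; it is essential to weight configurations by $\alpha_w$ and to exploit $\E[\log\alpha_w(\bX_w)]=0$. It is also tempting to first invoke Fact~\ref{fact} to reduce to an unconditional bound on $H(\bY_{N(v)})$ and then combine subadditivity with $H(\bY_w)\le\log(1+\lambda_w)$, but this produces $\Bigl(\log\bigl(\prod_{u\in N(v)}(1+\lambda_u)\bigr)-H(q_v)\Bigr)/(1-q_v)$ in place of $\log\bigl(\prod_{u\in N(v)}(1+\lambda_u)-1\bigr)$, which is in general larger; keeping the conditioning on $\ov{Q_v}$ inside the Jensen step is precisely what produces the ``$-1$''.
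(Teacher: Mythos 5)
Your argument is correct, though it organizes the computation differently from the paper. The paper first splits off the auxiliary random variable $\bT=\bI\cap N(v)$ recording which coordinates of $\bY_{N(v)}$ are nonzero, computes
\[H(\bY_{N(v)}\mid\bT=T)=\sum_{u\in T}H(\bX_u\mid\bX_u\ne0)=\sum_{u\in T}\log\lambda_u\]
exactly (via Fact~\ref{fact}), and only then applies Proposition~\ref{prop2.4} on the ground set $N(v)$ with weights $\lambda_u$ and distribution $p_T=\pr(\bT=T\mid\ov{Q_v})$; this delivers the ``$-1$'' with no residual term. You instead apply Proposition~\ref{prop2.4} once, directly to the finer encoding of $\bY_{N(v)}$ by its nonzero entries (ground set $\bigcup_w(S_w\setminus\{0\})$, weights $\alpha_w(\cdot)$), which makes the Jensen step a single stroke but leaves the correction $\E\bigl[\sum_{w\in N(v)}\log\alpha_w(\bY_w)\mid\ov{Q_v}\bigr]$, whose vanishing you then establish from $\E[\log\alpha_w(\bX_w)]=0$ together with the distributional facts that $\bY_w\mid Q_w\sim\bX_w$ and $\bY_w=0$ on $\ov{Q_w}$. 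Those facts are the same resource the paper uses (unstated, but needed) to justify $H(\bY_{N(v)}\mid\bT=T)=\sum_{u\in T}\log\lambda_u$, so the two proofs draw on the same structural input about $\bY$; the difference is whether one packages it as an exact conditional-entropy identity (paper) or as a zero-mean claim about $\log\alpha_w$ (yours). What your route buys is a more uniform treatment --- a single application of the Jensen bound over the finest configuration space --- at the cost of having to verify the correction term vanishes rather than it being automatically absent. Your closing observation about why routing through Fact~\ref{fact} and subadditivity on $H(\bY_{N(v)})$ would lose the ``$-1$'' is also accurate and worth keeping.
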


 \begin{proof}
Let $\bI$ be a random independent set from $\cI(G)$ chosen with $P(\bI=I)=\prod_{v \in I} \lambda_v/Z,$ and  $\bT=\bI \cap N(v)$. Observe that $H(\bY_{N(v)}|\bT=T)=\sum_{u \in T} H(\bX_u|\bX_u\ne 0)=\sum_{u \in T} \log \lambda_u$. Thus, setting $p_T=\pr(\bT=T|\ov{Q_v})$,
			\[H(\bY_{N(v)}|\ov{Q_v})=H(\bY_{N(v)}, \bT|\ov{Q_v})=H(\bT|\ov{Q_v})+H(\bY_{N(v)}|\bT,\ov{Q_v})=\sum_{T \ne \emptyset} p_T\left(\log(1/p_T)+\sum_{u \in T}\log \lambda_u \right).\]
By Proposition \ref{prop2.4}, this is at most
\[\log\left(\sum_{\emptyset \ne T \sub N(v)}\prod_{u \in T} \lambda_u \right) = \log\left(\prod_{u \in N(v)} (1+\lambda_u)-1\right).\] \end{proof}

			The combination of \eqref{Thm3.1.1}-\eqref{10} gives
			\beq{11}H(\bY) \le \sum_{v \in A} \left[q_v\log(1+\mu)+\frac{1}{d_{N(v)}} \left\{H(q_v)+(1-q_v)\log\left(\prod_{u \in N(v)}(1+\lambda_u)-1\right)\right\}\right].\enq
			The contribution of $v$,
			\[\frac{1}{d_{N(v)}}\log\left(\prod_{u \in N(v)}(1+\lambda_u)-1\right)+\frac{1}{d_{N(v)}}\left[H(q_v)+q_v\left\{d_{N(v)}\log(1+\mu)-\log\left(\prod_{u \in N(v)}(1+\lambda_u)-1\right)\right\}\right],\]
			is maximized at
			\[q_v=\frac{2^T}{2^T+1}=\frac{(1+\mu)^{d_{N(v)}}}{(1+\mu)^{d_{N(v)}}+\prod_{u \in N(v)}(1+\lambda_u)-1},\]
			where $T=d_{N(v)}\log(1+\mu)-\log[\prod_{u \in N(v)}(1+\lambda_u)-1]$.
			
			Inserting this value of $q_v$ in \eqref{11} gives
			\[\log Z= H(\bY)\le \sum_{v \in A} \frac{1}{d_{N(v)}}\log\Bigl[(1+\mu)^{d_{N(v)}}+\prod_{u \in N(v)} (1+\lambda_u)-1\Bigr].\]

		\end{proof}
			
	Let $P$ be a graded poset with levels $P_1,P_2,\dots, P_k$. For $X\subseteq P_j$ for some $j$, write $P_{<X}=\{z\in P: z<v\text{ for some } v\in X\}$, $M(X)=P_j \setminus X$, $P_X=\{z\in P_1\cup\dots\cup P_{j-1}: z\not< x\ \forall x\in X\}$, and $\bar P_X=P_X \cup M(X)$. If $X$ is a singleton $\{v\}$, then we simply write $P_{<v}$ for $P_{<X}$. Finally, let $N^i(v)=P_i \cap \{y:y>v \mbox{ or } y<v\}$, $d^i(v)=|N^i(v)|$ and $d_{N^i(v)}=\min\{d^{i+1}(w):w \in N^i(v)\}$. When not working with the main poset $P$, we will be adding subscripts to the notation to keep track of the poset we are considering.
 
	For a poset $P$ with $k$ levels and $\lambda_1,\ldots,\lambda_k \ge 0$, define $f_P(\lam_1,\dots,\lam_k)$ recursively as follows:
    for any poset $Q$ with one level, $f_Q(\lam_1):=(1+\lam_1)^{|Q|}$. If $P$ has $k\geq 2$ layers, then
	$$f_P(\lam_1,\dots,\lam_k):=\prod_{v\in P_k}\left((1+\lam_k)^{d_{N^{k-1}(v)}}+f_{P_{<v}}(\lam_1,\dots,\lam_{k-1})-1\right)^{1/d_{N^{k-1}(v)}}.$$
 For example, 
 \beq{2 level f} \mbox{if $P=P_1\cup P_2$, then 
$f_P(\lam_1,\lam_2)=\prod_{v\in P_2}\left((1+\lam_2)^{d_{N^1(v)}}+(1+\lam_1)^{d^1(v)}-1\right)^{1/d_{N^1(v)}}$.}\enq
    Observe that the assumption $\lam_i\geq0$ easily yields 
    \beq{f.ge.1} \mbox{$f_P \ge 1$\,, \ for any poset $P$.}\enq

    \begin{proposition}\label{lem:incr}
        Let $P$ be a graded poset with $k$ levels, and $\lambda_1,\ldots, \lambda_k \ge 0$. For any $Y\subseteq P_k$, 
        $f_{\overline{P}_Y}(\lam_1,\dots,\lam_k)\leq f_P(\lam_1,\dots,\lam_k).$
    \end{proposition}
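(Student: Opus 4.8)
The plan is to prove, by induction on the number $k$ of levels, the slightly more general statement: if $U$ is any up-set (filter) of a graded poset $P$ with $k$ levels, then
\[f_U(\lam_1,\dots,\lam_k)\le f_P(\lam_1,\dots,\lam_k).\]
Proposition~\ref{lem:incr} is the special case $U=\overline{P}_Y$, since $\overline{P}_Y$ is precisely $P$ with the down-closure of $Y$ deleted, hence an up-set. We let $U$ inherit the level structure of $P$; since the nonempty levels of an up-set form a top segment $\{j,\dots,k\}$, and we adopt the convention $\min\emptyset=\infty$, the recursion defining $f_U$ still makes literal sense — the factor attached to a $v\in U_k$ with $N^{k-1}(v)=\emptyset$ being $\lim_{d\to\infty}\big((1+\lam_k)^d+C-1\big)^{1/d}=1+\lam_k$ — with $f_\emptyset=1$.

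The analytic core is a monotonicity property of the single factor in the recursion for $f$. Fix $a\ge1$, $C\ge1$ and set $g_C(d)=\big(a^d+C-1\big)^{1/d}$ for $d\in(0,\infty]$. Then $g_C$ is non-decreasing in $C$ (clear, as $x\mapsto x^{1/d}$ is increasing) and non-increasing in $d$. For the latter it suffices to check that the derivative in $s$ of $\tfrac1s\log(a^s+C-1)$ is $\le0$ for $s>0$; since $s\ln a=\ln(a^s)$, the substitution $x=a^s\ (\ge1)$ turns this into $\tfrac{x\ln x}{x+C-1}\le\ln(x+C-1)$, which holds because $\tfrac{x}{x+C-1}\le1$ and $0\le\ln x\le\ln(x+C-1)$; the case $a=1$ is immediate since then $g_C(d)=C^{1/d}$. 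We apply this with $a=1+\lam_k$ and $C=f_{(\cdot)_{<v}}$, which is $\ge1$ by \eqref{f.ge.1}.

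Now run the induction. The base case $k=1$ reads $(1+\lam_1)^{|U_1|}\le(1+\lam_1)^{|P_1|}$, true as $\lam_1\ge0$. For $k\ge2$, write $f_Q=\prod_{v\in Q_k}\phi_Q(v)$ with $\phi_Q(v)=g_{f_{Q_{<v}}}\!\big(d_{N^{k-1}_Q(v)}\big)$. Three facts drive the step. (i) $U_k\subseteq P_k$, and each $\phi_P(v)\ge1$ by \eqref{f.ge.1}, so dropping the factors with $v\in P_k\setminus U_k$ gives $f_P\ge\prod_{v\in U_k}\phi_P(v)$. (ii) For $w\in U_{k-1}$, every element of $P_k$ above $w$ lies in $U$ by the up-set property, so $d^k_U(w)=d^k_P(w)$; as $N^{k-1}_U(v)\subseteq N^{k-1}_P(v)$ with $d^k$ unchanged on it, taking the minimum over the smaller set yields $d_{N^{k-1}_U(v)}\ge d_{N^{k-1}_P(v)}$. (iii) For $v\in U_k$, $U_{<v}$ is an up-set of the $(k-1)$-level poset $P_{<v}$, so the induction hypothesis gives $f_{U_{<v}}\le f_{P_{<v}}$. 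Combining (ii), (iii) and the monotonicity of $g$ in both arguments, $\phi_U(v)=g_{f_{U_{<v}}}\!\big(d_{N^{k-1}_U(v)}\big)\le g_{f_{P_{<v}}}\!\big(d_{N^{k-1}_P(v)}\big)=\phi_P(v)$ for every $v\in U_k$; multiplying over $v\in U_k$ and invoking (i) closes the induction.

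The crux is getting fact (ii) to point the useful way: passing to an up-set can only \emph{increase} the quantities $d_{N^{k-1}(v)}$ (the minimum is over a smaller index set), so one genuinely needs $g_C$ to be \emph{decreasing} in $d$ — the two effects "fewer vertices in the top level" and "larger min-degrees for the survivors" have no a priori reason to cooperate, and it is the short computation in the second paragraph that makes them do so. A secondary point, which forces the shape of the induction, is that $(\overline{P}_Y)_{<v}$ is in general \emph{not} of the form $\overline{(P_{<v})}_W$; what does survive is that it is an up-set of $P_{<v}$, which is why the inductive statement is phrased for up-sets (and why the $\min\emptyset=\infty$ convention is needed to keep the recursion honest once a bottom level empties).
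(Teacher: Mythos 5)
Your proof is correct, and its mechanics coincide with the paper's: induction on the number of levels, the monotonicity of $d\mapsto\frac1d\log(a^d+C-1)$ (decreasing in $d$, increasing in $C$), and padding the product from the surviving top-level vertices out to all of $P_k$ via $f\ge1$. The one genuine difference is the inductive statement, and here your version is not merely a stylistic preference. The paper keeps the hypothesis in the form ``$f_{\overline{P}_Y}\le f_P$'' and, to apply it to $Q_{<v}$ with $Q=\overline{P}_Y$, asserts $Q_{<v}=\overline{(P_{<v})}_Z$ with $Z=N^{k-1}(v)\cap P_{<Y}$. Your closing remark correctly flags that this can fail, and indeed it fails even in the intended setting: in the bottom half $P=P_0\cup P_1\cup P_2$ of $[3]^2$, take $Y=\{(0,2)\}$ and $v=(2,0)$; then $N^1(v)=\{(1,0)\}$ and $P_{<Y}=\{(0,0),(0,1)\}$ give $Z=\emptyset$, so $\overline{(P_{<v})}_Z=P_{<v}=\{(0,0),(1,0)\}$, whereas $Q_{<v}=\{(1,0)\}$ because $(0,0)$ lies below $(0,2)$ and has been deleted. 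The general issue is that a deleted $w<v$ need not lie below any deleted element of $N^{k-1}(v)$. What does survive --- and is exactly what you isolate --- is that $Q_{<v}$ is an up-set of $P_{<v}$; strengthening the inductive statement to all up-sets of $P$ makes the recursion close cleanly, and your $\min\emptyset=\infty$ convention (giving factor $1+\lam_k$) also handles the vacated bottom levels that the paper passes over in silence. So your argument closes a small but real gap in the published proof, and the proposition would be more robust written in your language.
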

    \begin{proof}
        Let $Q=\overline{P}_Y$. We use induction on the number of levels $k$. The assertion trivially holds for $k=1$. For $k\geq 2$,
        \begin{align*}
            f_{Q}(\lam_1,\dots,\lam_k)&=\prod_{v\in Q_{k}}\left((1+\lam_{k})^{d_{N_{Q}^{k-1}(v)}}+f_{Q_{<v}}(\lam_1,\dots,\lam_{k-1})-1\right)^{1/d_{N_{Q}^{k-1}(v)}}\\
            &\leq \prod_{v\in Q_{k}}\left((1+\lam_{k})^{d_{N^{k-1}(v)}}+f_{Q_{<v}}(\lam_1,\dots,\lam_{k-1})-1\right)^{1/d_{N^{k-1}(v)}},
        \end{align*}
        where the inequality uses the facts that $\frac{1}{x}\log(A^x+B)$ ($A,B \ge 0$) is decreasing for $x>0$ and that $d_{N_Q^{k-1}}(v)\geq d_{N^{k-1}(v)}$ for any $v \in Q_k$. Furthermore, the induction hypothesis yields $f_{Q_{<v}}(\lambda_1,\ldots,\lambda_{k-1})\leq f_{P_{<v}}(\lambda_1,\ldots,\lambda_{k-1})$ for any $v \in Q_k$ (because, with $Z:=N^{k-1}(v)\cap P_{<Y}$, we have $Q_{<v}=\overline{\left(P_{<v}\right)}_{Z}$). Therefore,
        \begin{align*}
        f_{Q}(\lam_1,\dots,\lam_k)&\leq \prod_{v\in Q_{k}}\left((1+\lam_{k})^{d_{N^{k-1}(v)}}+f_{P_{<v}}(\lam_1,\dots,\lam_{k-1})-1\right)^{1/d_{N^{k-1}(v)}}\\
        &\leq \prod_{v\in P_{k}}\left((1+\lam_{k})^{d_{N^{k-1}(v)}}+f_{P_{<v}}(\lam_1,\dots,\lam_{k-1})-1\right)^{1/d_{N^{k-1}(v)}}=f_P(\lam_1,\dots,\lam_k)
        \end{align*}
        (the last inequality holds because each extra term is $\geq1$).
    \end{proof}

    We are now ready to prove the main theorem of this section.
	\begin{theorem}\label{thm:multi_level}
		Let $P$ be a graded poset with levels $P_1,P_2,\dots, P_k$, and $\cA(P)$ be the collection of antichains of $P$. For each $x \in P_i$, define $\lam_x \equiv \lam_i$ where $\lam_j\geq 1$ for all $1\leq j\leq k$. Then,
		\beq{thm.multi} \sum_{I\in \mathcal{A}(P)}\prod_{x\in I}\lam_x\leq f_P(\lam_1,\dots,\lam_k).\enq
	\end{theorem}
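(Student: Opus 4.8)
The plan is to induct on the number of levels $k$, recycling the entropy argument from the proof of Theorem \ref{thm:two_level}. The base case $k=1$ is trivial: every subset of $P_1$ is an antichain, so the left side of \eqref{thm.multi} equals $\sum_{I\subseteq P_1}\lam_1^{|I|}=(1+\lam_1)^{|P_1|}=f_P(\lam_1)$. For $k\ge2$, put $P'=P_1\cup\cdots\cup P_{k-1}$ and let $Z$ denote the left side of \eqref{thm.multi}. Since the antichains of $P$ are exactly the independent sets of the comparability graph of $P$, the setup recalled just before the proof of Theorem \ref{thm:two_level} applies: it gives, for each $x\in P$, a variable $\bX_x$ with $H(\bX_x)=\log(1+\lam_x)$ and $H(\bX_x\mid\bX_x\ne0)=\log\lam_x$ (Fact \ref{fact}), and a random vector $\bY=(\bY_x:x\in P)$ supported on the antichain-compatible vectors with $H(\bY)=\log Z$. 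Set $\bI=\{x\in P:\bY_x\ne0\}$ (so $\pr(\bI=I)\propto\prod_{x\in I}\lam_x$), and for $v\in P_k$ let $Q_v=\{\bY_w=0\text{ for all }w\in P_{<v}\}$ and $q_v=\pr(Q_v)$; note $P_{<v}\subseteq P'$ is exactly the neighborhood of $v$ in the comparability graph of $P$.

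I would then write $H(\bY)=H(\bY_{P_k}\mid\bY_{P'})+H(\bY_{P'})$, bound $H(\bY_{P_k}\mid\bY_{P'})\le\sum_{v\in P_k}H(\bY_v\mid\bY_{P'})$ by subadditivity, and bound $H(\bY_{P'})\le\sum_{v\in P_k}\tfrac1{d_{N^{k-1}(v)}}H(\bY_{P_{<v}})$ by Shearer's Lemma (Lemma \ref{Sh}) applied to the cover of $P'$ by $\{P_{<v}:v\in P_k\}$ with weights $1/d_{N^{k-1}(v)}$. Verifying the Shearer hypothesis is the one genuinely poset-theoretic step: for $u\in P'$ and $v\in P_k$ with $u<v$, gradedness produces $u'\in P_{k-1}$ with $u\le u'<v$, so $u'\in N^{k-1}(v)$ and $d_{N^{k-1}(v)}\le d^k(u')\le d^k(u)=|\{v\in P_k:v>u\}|$, whence $\sum_{v\in P_k,\,v>u}\tfrac1{d_{N^{k-1}(v)}}\ge d^k(u)\cdot\tfrac1{d^k(u)}=1$. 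As in the proof of Theorem \ref{thm:two_level}, one then gets $H(\bY_v\mid\bY_{P'})=q_v\log(1+\lam_k)$ (off $Q_v$ the coordinate $\bY_v$ is forced to $0$; on $Q_v$ it is independent of $\bY_{P'}$ and distributed as $\bX_v$) and $H(\bY_{P_{<v}})=H(q_v)+(1-q_v)H(\bY_{P_{<v}}\mid\ov{Q_v})$.

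The heart of the argument is the analogue of the Claim within Theorem \ref{thm:two_level}, namely
\[H(\bY_{P_{<v}}\mid\ov{Q_v})\le\log\bigl(f_{P_{<v}}(\lam_1,\dots,\lam_{k-1})-1\bigr).\]
To prove it, let $\bT=\bI\cap P_{<v}$, an antichain of the subposet $P_{<v}$, with $\ov{Q_v}=\{\bT\ne\emptyset\}$; conditionally on $\bT=T$ the coordinates $\bY_u$ ($u\in T$) are independent copies of $\bX_u\mid\bX_u\ne0$ while the rest vanish, so $H(\bY_{P_{<v}}\mid\bT=T)=\sum_{u\in T}\log\lam_u$. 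Writing $p_T=\pr(\bT=T\mid\ov{Q_v})$ and expanding $H(\bY_{P_{<v}}\mid\ov{Q_v})=H(\bT\mid\ov{Q_v})+H(\bY_{P_{<v}}\mid\bT,\ov{Q_v})=\sum_{T\ne\emptyset}p_T\bigl(\log(1/p_T)+\sum_{u\in T}\log\lam_u\bigr)$, Proposition \ref{prop2.4} bounds this by $\log\bigl(\sum_{\emptyset\ne T\in\mathcal{A}(P_{<v})}\prod_{u\in T}\lam_u\bigr)$. Since $P_{<v}$ is a graded poset with $k-1$ nonempty levels (as $v$ lies in the top level of $P$ and $k\ge2$), the induction hypothesis gives $\sum_{T\in\mathcal{A}(P_{<v})}\prod_{u\in T}\lam_u\le f_{P_{<v}}(\lam_1,\dots,\lam_{k-1})$; subtracting the empty-set term proves the claim (and $f_{P_{<v}}>1$, so the logarithm is defined). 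This step — passing to antichains of the subposet $P_{<v}$, rather than to subsets of a part of a bipartite graph as in Theorem \ref{thm:two_level} — is exactly what makes $f_{P_{<v}}$, and not a product $\prod_{u\in N^{k-1}(v)}(1+\lam_u)$, appear; it is where I expect the real work to lie, together with the Shearer verification above.

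Finally, substituting the above into $\log Z=H(\bY)$ yields
\[\log Z\le\sum_{v\in P_k}\Bigl[q_v\log(1+\lam_k)+\tfrac1{d_{N^{k-1}(v)}}\bigl(H(q_v)+(1-q_v)\log(f_{P_{<v}}-1)\bigr)\Bigr],\]
and for each $v$ I would maximize the bracketed quantity over $q_v\in[0,1]$ using $\max_{q\in[0,1]}\bigl(H(q)+qT\bigr)=\log(2^{T}+1)$ with $T=d_{N^{k-1}(v)}\log(1+\lam_k)-\log(f_{P_{<v}}-1)$; the maximizer is $q_v=(1+\lam_k)^{d_{N^{k-1}(v)}}/\bigl((1+\lam_k)^{d_{N^{k-1}(v)}}+f_{P_{<v}}-1\bigr)$, and the $v$-th summand then equals $\tfrac1{d_{N^{k-1}(v)}}\log\bigl[(1+\lam_k)^{d_{N^{k-1}(v)}}+f_{P_{<v}}-1\bigr]$. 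Summing over $v\in P_k$ gives exactly $\log f_P(\lam_1,\dots,\lam_k)$, so $Z\le f_P(\lam_1,\dots,\lam_k)$ and the induction closes.
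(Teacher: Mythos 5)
Your proof is correct, and it takes a genuinely different route from the paper's. The paper's inductive step first fixes $X=I\cap P_k$, applies the induction hypothesis to the sub-poset $P_X$ (elements not below $X$), uses Proposition~\ref{lem:incr} to replace $f_{(P_X)_{<v}}$ by $f_{P_{<v}}$, and then invokes Theorem~\ref{thm:two_level} (the two-level inequality, as a black box) on the bipartite graph $P_k\cup P_{k-1}$ with the cooked-up weights $\mu_v$. You instead rerun the entropy argument of Theorem~\ref{thm:two_level} directly on the comparability graph of the full poset, with Shearer applied to the cover $\{P_{<v}:v\in P_k\}$ of $P'$ rather than to the neighborhoods in a bipartite graph, and feed the induction hypothesis into the analogue of the Claim inside that proof (via Proposition~\ref{prop2.4} applied to the antichains of $P_{<v}$). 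The two places where your route has genuine content are both handled correctly: the Shearer hypothesis holds because for each $v>u$ gradedness gives $u'\in P_{k-1}\cap N^{k-1}(v)$ with $u\le u'$, whence $d_{N^{k-1}(v)}\le d^k(u')\le d^k(u)$; and $P_{<v}$ is itself a graded poset with $k-1$ nonempty levels (every $u<v$ extends to a saturated chain through level $k-1$ inside $P_{<v}$), so the induction hypothesis applies and $f_{P_{<v}}\ge 1+\lambda_1>1$ keeps the logarithm defined. A concrete benefit of your approach is that it bypasses the monotonicity lemma (Proposition~\ref{lem:incr}) entirely and needs no manipulation of the weights $\mu_v$; what the paper's route buys in exchange is that it reuses the two-level theorem verbatim without having to re-examine the entropy setup on a non-bipartite graph. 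Both close the induction with the same elementary maximization $\max_q\bigl(H(q)+qT\bigr)=\log(2^T+1)$, and both yield exactly $\log f_P$.
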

	\begin{proof}
		
		We proceed by induction on $k$. The base case ($k=2$) follows from Theorem \ref{thm:two_level} (see \eqref{2 level f}). Assume the theorem is true for any poset with $k-1$ levels and let $P$ be a poset with $k$ levels. Then for any $X\subseteq P_k$, we have
  \[\sum_{\substack{I\in \mathcal{A}(P)\\ I\cap P_k=X}} \prod_{x\in I}\lam_x=\lam_k^{|X|}\sum_{I\in \mathcal{A}(P_X)}\prod_{x\in I}\lam_x
			\leq \lam_k^{|X|}f_{P_X}(\lam_1,\dots,\lam_{k-1}).\]
		Therefore,
		\begin{align*} 
			\sum_{I\in \mathcal{A}(P)}\prod_{x\in I}\lam_x&\leq\sum_{X\subseteq P_k}\lam_k^{|X|}f_{P_X}(\lam_1,\dots,\lam_{k-1})\\
			&=\sum_{X\subseteq P_k}\lam_k^{|X|}\prod_{v\in (P_X)_{k-1}}\left((1+\lam_{k-1})^{d_{N_{P_X}^{k-2}(v)}}+f_{(P_X)_{<v}}(\lam_1,\dots,\lam_{k-2})-1\right)^{1/d_{N_{P_X}^{k-2}(v)}}\\
			&\stackrel{(*)}{\leq} \sum_{X\subseteq P_k}\lam_k^{|X|}\prod_{v\in (P_X)_{k-1}}\left((1+\lam_{k-1})^{d_{N^{k-2}(v)}}+f_{(P_X)_{<v}}(\lam_1,\dots,\lam_{k-2})-1\right)^{1/d_{N^{k-2}(v)}}\\
            &\leq \sum_{X\subseteq P_k}\lam_k^{|X|}\prod_{v\in (P_X)_{k-1}}\left((1+\lam_{k-1})^{d_{N^{k-2}(v)}}+f_{P_{<v}}(\lam_1,\dots,\lam_{k-2})-1\right)^{1/d_{N^{k-2}(v)}}
		\end{align*}
        where ($*$) uses the facts that $\frac{1}{x}\log(A^x+B)$ ($A,B \ge 0$) is decreasing for $x>0$ and that $d_{N_{P_X}^{k-2}}(v)\geq d_{N^{k-2}}(v)$ for any $v \in (P_X)_{k-1}$; the last inequality follows from Proposition \ref{lem:incr} (applied to $P_{<v}$ with $Y=(P_{<v})_{k-2}\cap P_{<X}$). For each $v \in P_{k-1}$, define
        \beq{def.mu} \mu_{v}=\left((1+\lam_{k-1})^{d_{N^{k-2}(v)}}+f_{P_{<v}}(\lam_1,\dots,\lam_{k-2})-1\right)^{1/d_{N^{k-2}(v)}}-1.\enq
        We have shown that
        $$\sum_{I\in \mathcal{A}(P)}\prod_{x\in I}\lam_x\le\sum_{X\subseteq P_k}\lam_k^{|X|}\prod_{v\in (P_X)_{k-1}}(1+\mu_{v}).$$        
        Recall from \eqref{f.ge.1} that $f_{Q}\geq1$ for any graded poset $Q$, so
        $$\mu_{v}\geq \left((1+\lam_{k-1})^{d_{N^{k-2}(v)}}\right)^{1/d_{N^{k-2}(v)}}-1=\lam_{k-1}\geq 1.$$
		Now, by applying Theorem \ref{thm:two_level} with $A=P_k,\ B=P_{k-1}$, $\mu=\lambda_k$, and $\mu_v$ in \eqref{def.mu} the weight for each $v \in B$, we get
		\begin{align*}
			&\sum_{X\subseteq P_k}\lam_k^{|X|}\prod_{v\in (P_{X})_{{k-1}}}(1+\mu_v)\leq \prod_{v\in P_k}\left((1+\lam_k)^{d_{N^{k-1}(v)}}+\prod_{u \in N^{k-1}(v)}(1+\mu_u)-1\right)^{1/d_{N^{k-1}(v)}}\\
			&\stackrel{\eqref{def.mu}}{=}\prod_{v\in P_k}\left((1+\lam_k)^{d_{N^{k-1}(v)}}+\prod_{u\in N^{k-1}(v)}\left((1+\lam_{k-1})^{d_{N^{k-2}(u)}}+f_{P_{<u}}(\lam_1,\dots,\lam_{k-2})-1\right)^{1/d_{N^{k-2}(u)}}-1\right)^{1/d_{N^{k-1}(v)}}\\
			&=\prod_{v\in P_k}\left((1+\lam_k)^{d_{N^{k-1}(v)}}+f_{P_{<v}}(\lam_1,\dots,\lam_{k-1})-1\right)^{1/d_{N^{k-1}(v)}}\\
			&=f_P(\lam_1,\dots,\lam_k),
		\end{align*}
        which completes the induction.
	\end{proof}
    
     For the rest of this section, we work specifically on $[3]^n$. Write $P_0, P_1,\dots, P_{2n}$ for the levels of $[3]^n$. Note that $[3]^n$ enjoys the special property that
     \beq{3n.special} d_{N^{i-1}(x)}-d^{i-1}(x) \ge n-i \quad \forall x \in P_i.\enq
     Let $P=P_0\cup P_1\cup\dots\cup P_n$ be the bottom half. Recall that $\alpha(P)$ denotes the number of antichains in $P$.
    
    \begin{lemma}\label{prop:bottom}
        For any $Y\subseteq P_n$, 
        \beq{py.anti} \alpha\left(\overline{P}_Y\right)\leq 2^{|M(Y)|\left(1+\frac{2\log 3}{n}\right)}.\enq
    \end{lemma}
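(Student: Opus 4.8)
The plan is a downward induction on the number of levels. For $0\le j\le n$ put $L(j)=P_0\cup\cdots\cup P_j$; I would prove that there are numbers $\epsilon_0=0,\epsilon_1,\dots,\epsilon_n$ with $\epsilon_n\le 2\log 3/n$ such that $\alpha(\overline{L(j)}_Z)\le 2^{|M(Z)|(1+\epsilon_j)}$ for every $j$ and every $Z\subseteq P_j$ (so $M(Z)=P_j\setminus Z$), the case $j=n$ being exactly the lemma. The base case $j=0$ is immediate with $\epsilon_0=0$, since $\overline{L(0)}_Z$ is either a single point or empty.

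For the inductive step I would first record the identity
\[\alpha(\overline{L(j)}_Z)=\sum_{X\subseteq M(Z)}\alpha\bigl(\overline{L(j-1)}_{N^-(Z\cup X)}\bigr),\]
where for $S\subseteq P_j$ one writes $N^-(S)$ for the set of $w\in P_{j-1}$ below some element of $S$: an antichain $A$ of $\overline{L(j)}_Z$ is determined by $X:=A\cap M(Z)$ together with an antichain of $\{z\in L(j)_Z:\ z\not< x\ \forall x\in X\}=L(j)_{Z\cup X}$, and $L(j)_{Z\cup X}=\overline{L(j-1)}_{N^-(Z\cup X)}$ by the "no chain through the deleted level" observation that already appears in the proof of Proposition~\ref{lem:incr}. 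Apply the inductive hypothesis termwise; writing $\beta:=1+\epsilon_{j-1}$, $U:=P_{j-1}\setminus N^-(Z)$ for the top level of $L(j)_Z$, and $G$ for the bipartite graph on $M(Z)\cup U$ with $v\sim w\iff w<v$, the disjointness $N^-(Z\cup X)=N^-(Z)\sqcup\bigl(N^-(X)\cap U\bigr)$ lets one rewrite the sum as $2^{\beta|U|}\sum_{X\subseteq M(Z)}\prod_{w\in U\setminus N_G(X)}2^{-\beta}$, which is precisely the weighted independence polynomial $Z(G,\lambda)$ with weights $\mu\equiv 1$ on $M(Z)$ and $\lambda_w\equiv 2^{\beta}-1\ (\ge 1)$ on $U$.

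Theorem~\ref{thm:two_level} now applies and yields $\alpha(\overline{L(j)}_Z)\le\prod_{v\in M(Z)}\bigl[2^{d_{N(v)}}+2^{\beta d(v)}-1\bigr]^{1/d_{N(v)}}$, where $d(v)$ and $d_{N(v)}$ are computed in $G$. Since every $w\in U$ has $G$-degree equal to $d^{j}(w)$ — no element of $Z$ covers such a $w$ — one gets $d_{N(v)}\ge d_{N^{j-1}(v)}$, while trivially $d(v)\le d^{j-1}(v)$; because $x\mapsto\tfrac1x\log(2^x+B)$ is decreasing, this replaces the $G$-quantities by the ambient $d^{j-1}(v)$ and $d_{N^{j-1}(v)}$. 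Feeding in the special property \eqref{3n.special}, $d_{N^{j-1}(v)}\ge d^{j-1}(v)+(n-j)$, and the elementary bound $d^{j-1}(v)\ge\lceil j/2\rceil$ valid in $[3]^n$ (hence $d_{N^{j-1}(v)}\ge n-\lfloor j/2\rfloor\ge n/2$), and abbreviating $d=d^{j-1}(v)$, $D=d_{N^{j-1}(v)}$, each factor is at most $2^{\,1+\frac1D\log(1+2^{\,d\epsilon_{j-1}-(D-d)})}$; so one may take $\epsilon_j:=\max_{v\in P_j}\tfrac1D\log\!\bigl(1+2^{\,d\epsilon_{j-1}-(D-d)}\bigr)$, which closes the induction.

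It then remains to verify $\epsilon_n\le 2\log 3/n$. For $j<n$ one has $D-d\ge n-j\ge 1$ and $D\ge n/2$, which forces $\epsilon_j=O(1/n)$: it is exponentially small in $n-j$ while $n-j$ is large, and over the last $O(1)$ levels it grows only to a small absolute multiple of $1/n$, so in particular $\epsilon_{n-1}\le 2/n$ with a little room. At the last step $j=n$ the special property is vacuous, but $D=d_{N^{n-1}(v)}\ge d^{n-1}(v)\ge\lceil(n+1)/2\rceil$ whenever $D=d$,
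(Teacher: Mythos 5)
Your plan — descend level by level, condition on $X=A\cap M(Z)$ to reduce $\alpha(\overline{L(j)}_Z)$ to a sum over the lower poset, recognize that sum (via the binomial identity $2^{\beta m}=\sum_{S}(2^\beta-1)^{|S|}$) as the weighted independence polynomial $Z(G,\lambda)$ with activities $1$ on $M(Z)$ and $2^\beta-1$ on $U$, then apply Theorem~\ref{thm:two_level} and pass from the $G$-degrees to the ambient $d^{j-1}(v)$ and $d_{N^{j-1}(v)}$ using monotonicity of $x\mapsto\frac1x\log(2^x+B)$ and \eqref{3n.special} — is correct in all these steps, and is in spirit the argument the paper gives, which first invokes Theorem~\ref{thm:multi_level} and then proves an inner induction bounding $f_{R_{<v}}(1,\dots,1)$. (A small typo: your rewrite should be $\sum_X 2^{\beta|U\setminus N_G(X)|}=\sum_X\sum_{S\subseteq U\setminus N_G(X)}(2^\beta-1)^{|S|}$, not $2^{\beta|U|}\sum_X\prod_{w\in U\setminus N_G(X)}2^{-\beta}$, which would give $\sum_X 2^{\beta|N_G(X)|}$; the object you then name, $Z(G,\lambda)$, is the right one.)

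The gap is the endgame. Where the paper keeps a $v$-dependent correction — its inductive bound \eqref{eq:half_ind} is $2^{d^{j-1}(v)(1+1/d_{N^{j-1}(v)})}$, so the surplus exponent $d^{j-1}(v)/d_{N^{j-1}(v)}$ is automatically at most $1$ at every level via \eqref{3n.special}, and $\log 3$ drops out of $2^{D}+2^{D+1}=3\cdot 2^{D}$ without any recursion on constants — you coarsen to a uniform $\epsilon_j=\max_{v\in P_j}(\cdots)$ and must then show $\epsilon_n\le 2\log 3/n$. The justification you give ("exponentially small, then grows to a small multiple of $1/n$") does not suffice: inserting $d\le n$ and $D\ge n/2$ independently into your recursion gives $n\epsilon_j\le 2\log\bigl(1+2^{n\epsilon_{j-1}-(n-j)}\bigr)$, which starting from $0$ climbs to roughly $4.3/n$ at $j=n$, well above $2\log 3/n\approx 3.17/n$. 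To save the constant one must use that $d$ and $D$ are linked for the same $v$ — at level $n$ one has $D=d$ for every $v$ with a coordinate equal to $1$, and then $D\mapsto\frac1D\log(1+2^{D\epsilon_{n-1}})$ is decreasing, so the maximum is taken at the smallest $D\approx n/2$, not at $d=n$ — which is exactly the per-vertex calculation your final sentence begins but does not complete (the proposal cuts off mid-sentence). The uniform-$\epsilon_j$ version can be carried through, but the numerics are delicate and are not done here; adopting the paper's $v$-dependent form of the inductive hypothesis makes the constant fall out cleanly and is what I would recommend.
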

	\begin{proof}
        Let $R:=\overline{P}_Y$ and $R_0,R_1,\dots, R_n$ be its levels. We will prove that
		\begin{equation}\label{eq:half_ind}
			f_{R_{<v}}(1,\dots,1)\leq 2^{d^{j-1}(v)\left(1+1/d_{N^{j-1}(v)}\right)} \quad \forall v \in R_j.
		\end{equation}
		We first show that \eqref{eq:half_ind} implies the lemma. We have
		\begin{align*}
			\alpha(R) \stackrel{\eqref{thm.multi}}{\le} f_R(1,\dots,1)&=\prod_{v\in R_n}\left(2^{d_{N_R^{n-1}(v)}}+f_{R_{<v}}(1,\dots,1)-1\right)^{1/d_{N_R^{n-1}(v)}}\\
   &\leq\prod_{v\in R_n}\left(2^{d_{N^{n-1}(v)}}+f_{R_{<v}}(1,\dots,1)\right)^{1/d_{N^{n-1}(v)}}\\
			&\stackrel{\eqref{eq:half_ind}}{\leq} \prod_{v\in R_n}\left(2^{d_{N^{n-1}(v)}}+2^{d^{n-1}(v)\left(1+1/d_{N^{n-1}(v)}\right)}\right)^{1/d_{N^{n-1}(v)}}\\
   &\stackrel{\eqref{3n.special}}{\le} \prod_{v \in R_n} 2\cdot 3^{1/d_{N^{n-1}(v)}}=\prod_{v \in R_n} 2^{1+\log 3/d_{N^{n-1}(v)}}.
		\end{align*}
Finally, the lemma follows by noticing that $d_{N^{n-1}(v)}\geq n/2$ for any $v\in R_n$.
		
		\nin \textit{Proof of (\ref{eq:half_ind}).} For $j=1$, for any $v \in R_1$, we trivially have $f_{R<v}(1) \le 2^{d^{0}(v)}<2^{d^{0}(v)\left(1+1/d_{N^{0}(v)}\right)}$. Next, assume the statement is true for any vertex up to layer $j-1$. For any $v\in R_j,$ 
		\begin{align*}
			f_{R_{<v}}(1,\dots,1)&=\prod_{u\in N^{j-1}(v)}\left(2^{d_{N^{j-2}(u)}}+f_{R_{<u}}(1,\dots,1)-1\right)^{1/d_{N^{j-2}(u)}}\\
			&\stackrel{(\dagger)}{\leq} \prod_{u\in N^{j-1}(v)}\left(2^{d_{N^{j-2}(u)}}+2^{d^{j-2}(u)\left(1+1/d_{N^{j-2}(u)}\right)}\right)^{1/d_{N^{j-2}(u)}}\\
   &\leq \prod_{u\in N^{j-1}(v)}2\left(1+2^{d^{j-2}(u)-d_{N^{j-2}(u)}+d^{j-2}(u)/d_{N^{j-2}(u)}}\right)^{1/d_{N^{j-2}(u)}}\\
   & \stackrel{\eqref{3n.special}}{\le} \prod_{u\in N^{j-1}(v)}2^{1+1/d_{N^{j-2}(u)}}
		\end{align*}
  (where $(\dagger)$ uses the induction hypothesis). But for every $u\in N^{j-1}(v)$, we have $d_{N^{j-2}(u)}\geq d_{N^{j-1}(v)}$, hence the above expression is at most $2^{d^{j-1}(v)\left(1+1/d_{N^{j-1}(v)}\right)}$.		
	\end{proof}

    \begin{proof} [Proof of Theorem \ref{thm:3^n}]
        For $I\in \cA(P_{n+1} \cup \ldots \cup P_{2n})$, let $X=X(I) \sub P_n$ be the "lower shadow" of $I$ on $P_{n}$, namely, $X=\{v \in P_{n}:v< w \in I\}$. Then        
        \[\begin{split} \alpha([3]^n) &= \sum_{Y \sub P_n}|\{I\in \cA(P_{n+1} \cup \ldots \cup P_{2n}):X(I)=Y\}|\cdot|\cA(\bar P_Y)|\\
        & \stackrel{\eqref{py.anti}}{\le} \sum_{Y \sub P_n}2^{(1+2\log 3/n)|M(Y)|}|\{I\in \cA(P_{n+1} \cup \ldots \cup P_{2n}):X(I)=Y\}|\\
        & \le 2^{2\log 3|P_n|/n}\sum_{Y \sub P_n}2^{|M(Y)|}|\{I\in \cA(P_{n+1} \cup \ldots \cup P_{2n}):X(I)=Y\}|\\
        & = 2^{2\log 3|P_n|/n} |\cA(P_n \cup \ldots \cup P_{2n})|\\
        & \stackrel{\eqref{py.anti}}{\le} 2^{2\log 3|P_n|/n}2^{(1+2\log 3/n)|P_n|}=2^{(1+4\log 3/n)|P_n|}.
        \end{split}\]
    \end{proof}

	\section{Proof of Theorem \ref{thm:t^n}}\label{sec.t^n}

The proposition below easily follows by adapting the proof of \cite[Theorem 1]{Kleitman1976OnJCP}. We put its short proof in the appendix  for ease of reference. We use $y \lessdot x$ to denote that $y$ is immediately below $x$ in $[t]^n$.

 \begin{proposition}\label{GK_extend}
     For any $t, n \ge 1$, the poset $[t]^n$ admits a chain partition $\mathcal{C}=\{C_1,C_2,\dots,C_N\}$ of size $N=N(t,n)$ that satisfies the following property: 
         \beq{WMA} \text{if $y$ is immediately below $x$ in a chain $C \in \cC$, then $y \lessdot x$ in $[t]^n$.}\enq
     
 \end{proposition}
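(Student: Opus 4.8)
The plan is to build the desired chain partition by a symmetric-chain-decomposition-type argument adapted to $[t]^n$, exactly as in the classical result of Kleitman that $[t]^n$ is a symmetric chain order. First I would recall the standard inductive construction: $[t]^n = [t]^{n-1} \times [t]$, so given a symmetric chain decomposition $\mathcal{C}'$ of $[t]^{n-1}$ we can lift each chain $C' = (z_0 \lessdot z_1 \lessdot \dots \lessdot z_m)$ of $[t]^{n-1}$ (where the $\lessdot$ are covers in $[t]^{n-1}$) to chains in $[t]^n$ by taking the $\lessdot$-grid $C' \times [t]$ and re-partitioning it into saturated chains. The key observation to carry through the induction is that the property \eqref{WMA} is about \emph{cover relations}: it suffices that consecutive elements along each new chain differ by a cover in $[t]^n$. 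Since a cover in $[t]^{n-1}$ times an equality, or an equality times a cover in $[t]$, is a cover in $[t]^n$, any re-partitioning of $C' \times [t]$ into chains all of whose steps are "move right in $[t]^{n-1}$ by a cover" or "move up in $[t]$ by one" will automatically satisfy \eqref{WMA}.

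The concrete step I would take is the following. Given a saturated chain $C'$ in $[t]^{n-1}$ of length $m+1$ (so $|C'| = m+1$), consider the "ladder" $C' \times \{0,1,\dots,t-1\}$, which we think of as an $(m+1) \times t$ grid where horizontal steps are covers of $[t]^{n-1}$ and vertical steps are covers in the last coordinate; both kinds of steps are covers in $[t]^n$. I would then partition this grid into monotone lattice paths (staircase chains), each using only rightward and upward unit steps, so that (i) every grid point is in exactly one path, (ii) the multiset of path lengths is the correct one to make the final family a chain partition of the optimal size $N(t,n)$, and (iii) — crucially for the counting application but here only \eqref{WMA} is claimed — each step of each path is a cover in $[t]^n$, which holds automatically by construction. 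The standard bracketing / parenthesis-matching rule (the $t$-ary analogue of Greene–Kleitman's matching for $B_n$) gives such a decomposition; one checks by a direct counting argument, or by citing the rank-generating-function identity, that the number of chains produced equals the number of elements of the middle rank(s), i.e. $N(t,n)$. Since the base case $n=1$ (a single chain $0 \lessdot 1 \lessdot \dots \lessdot (t-1)$, with $N(t,1)=1$) trivially satisfies \eqref{WMA}, the induction closes.

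The main obstacle — or rather the only place requiring care — is bookkeeping the chain \emph{count} rather than the cover property itself: verifying that the re-partitioning of each grid produces exactly the right number of chains so that the total is $N(t,n)$ and not more. The cleanest way around this is not to re-derive Kleitman's symmetric chain decomposition from scratch but to invoke it: \cite{Kleitman1976OnJCP} already furnishes a chain partition of $[t]^n$ into $N(t,n)$ chains, and the content of the proposition is merely that one may take it to be \emph{saturated} (all consecutive pairs are covers). So the argument reduces to the remark that Kleitman's inductive construction, when carried out with saturated chains at every stage as just described, never introduces a non-cover step; equivalently, any chain partition into $N(t,n)$ chains can be refined/rearranged so that each chain is saturated while keeping the number of chains equal to $N(t,n)$ — the latter being forced since $N(t,n)$ is the minimum possible number of chains in any chain partition (Dilworth / Mirsky), so no chain can be "split" and the only freedom is in how the chains are routed, which we route through covers. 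I would write this up in two short paragraphs: one recalling the $[t]^{n-1} \times [t]$ induction and the grid re-partitioning, noting cover-ness is preserved; one observing that the resulting partition has size $N(t,n)$ by the same count as in \cite{Kleitman1976OnJCP}, hence \eqref{WMA} holds for an optimal chain partition, as claimed.
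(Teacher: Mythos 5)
Your proposal and the paper's proof belong to the same classical family (symmetric-chain-decomposition for products of chains), but they present it differently. The paper gives a one-shot, explicit construction: assign to each $x\in[t]^n$ a bracket string ($x_i$ right parentheses followed by $t-1-x_i$ left parentheses in block $i$), let $B(x)$ be the set of matched parentheses, and take the chains to be the fibers $C_x=\{y:B(y)=B(x)\}$. Both the chain-partition property and \eqref{WMA} are then immediate from the definition. You instead lean on the $[t]^{n-1}\times[t]$ induction with grid re-partitioning into staircase paths, which is a correct and standard way to prove the same fact, but requires carrying the bookkeeping ("the multiset of path lengths is the correct one") through the induction rather than reading it off a closed-form rule. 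You do mention the bracketing construction, so you have identified the right tool; the paper simply commits to it directly.

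Two smaller points worth flagging. First, \cite{Kleitman1976OnJCP} does not already furnish the chain partition of $[t]^n$; the appendix explicitly \emph{extends} that construction (which is for the Boolean lattice) to $[t]^n$, so the reduction you describe as "the cleanest way around" — just invoke the existing decomposition — is not quite available as stated and you would still need to carry out the extension. Second, the closing "equivalently, any chain partition into $N(t,n)$ chains can be refined/rearranged so that each chain is saturated" is not an equivalence and is not obviously true for an arbitrary minimum chain partition; it is fortunately not needed, since the point is only to exhibit one saturated minimum chain partition, which the bracketing (or the induction done carefully) provides. I would drop that remark and either write out the bracket rule, as the paper does, or carry the grid-count bookkeeping through the induction.
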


    We also recall from \cite{Moshkovitz2012RamseyTI} the following easy lower bound on $N(t,n)$ that works for all $t$ and $n$.

    \begin{lemma}[Lemma 2.6, \cite{Moshkovitz2012RamseyTI}]\label{lem.MS}
        For all $t, n \ge 1$,
        \[N(t,n) \ge \frac{2t^{n-1}}{3\sqrt n}.\]
    \end{lemma}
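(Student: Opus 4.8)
The plan is to interpret $N(t,n)$ probabilistically and exploit the fact that the sequence of level sizes of $[t]^n$ is symmetric and unimodal.

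First I would set up a random walk: let $U_1,\dots,U_n$ be i.i.d.\ uniform on $\{0,1,\dots,t-1\}$ and $S=U_1+\dots+U_n$, so that the $k$-th level of $[t]^n$ has size $t^n\,\pr(S=k)$ and hence $N(t,n)=t^n\max_k\pr(S=k)$. Since a single $U_i$ has mean $(t-1)/2$ and variance $(t^2-1)/12$, one has $\mu:=\E S=n(t-1)/2$ and $\mathrm{Var}(S)=n(t^2-1)/12$; and because $t-1-U_i\stackrel{d}{=}U_i$, the law of $S$ is symmetric about $\mu$. It is also unimodal about $\mu$, which is the classical log-concavity of the coefficients of $(1+z+\dots+z^{t-1})^n$ (and in any case follows by an easy induction on $n$: convolving a symmetric unimodal sequence with the flat sequence $(1,\dots,1)$ of length $t$ keeps it symmetric and unimodal).

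The heart of the argument will be the standard fact that a symmetric unimodal probability distribution on $\ZZ$ is a mixture of uniform distributions on finite integer intervals symmetric about its center; this is just Abel summation applied to the non-increasing list of values of $\pr(S=\cdot)$, and it yields a probability measure $\nu=(\nu_j)$ and nested intervals $I_0\subset I_1\subset\cdots$ (each $I_j$ symmetric about $\mu$, $|I_j|=|I_{j-1}|+2$) with $\pr(S=\cdot)=\sum_j\nu_j\,\mathrm{Unif}(I_j)$. Since every $\mathrm{Unif}(I_j)$ is centered at $\mu$, the law of total variance gives $\mathrm{Var}(S)=\sum_j\nu_j\,\mathrm{Var}(\mathrm{Unif}(I_j))=\sum_j\nu_j(|I_j|^2-1)/12$, i.e.\ $\E_\nu[\,|I|^2\,]=12\,\mathrm{Var}(S)+1=n(t^2-1)+1$; and since $\mu$ lies in every $I_j$, $\max_k\pr(S=k)=\pr(S=\mu)=\sum_j\nu_j/|I_j|=\E_\nu[\,1/|I|\,]$. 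Then Jensen's inequality ($x\mapsto1/x$ is convex on $(0,\infty)$ and $|I|\ge1$) followed by Cauchy--Schwarz gives
\[
\E_\nu\!\left[\tfrac1{|I|}\right]\ \ge\ \frac1{\E_\nu[\,|I|\,]}\ \ge\ \frac1{\sqrt{\E_\nu[\,|I|^2\,]}}\ =\ \frac1{\sqrt{\,n(t^2-1)+1\,}}\,,
\]
so $N(t,n)\ge t^n/\sqrt{n(t^2-1)+1}\ge t^n/\sqrt{nt^2}=t^{n-1}/\sqrt n$, which is in fact stronger than the claimed $\tfrac23\,t^{n-1}/\sqrt n$ and recovers the correct $t$-dependence up to a constant.

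I do not expect a genuine obstacle here; the two places that call for a little care are (i) stating the symmetric--unimodal mixture representation cleanly, and (ii) the parity of $n(t-1)$ --- when it is odd, $\mu$ is a half-integer and the intervals $I_j$ have an even number of points (with two central points both attaining the maximum), but everything goes through verbatim once phrased through $|I_j|$, the number of lattice points of $I_j$, since $\mathrm{Var}(\mathrm{Unif}(I))=(|I|^2-1)/12$ and the per-point mass is $1/|I|$ regardless of the parity of $|I|$. It is worth noting that by the local central limit theorem $\max_k\pr(S=k)\sim\sqrt{6/(\pi(t^2-1)n)}$ (cf.\ \eqref{eq_MR}), so the constant $2/3$ --- indeed even $1$ --- sits comfortably below the true constant $\sqrt{6/\pi}\approx1.38$, which is exactly why a crude argument with no limit theorem suffices.
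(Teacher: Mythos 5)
The paper does not actually prove this lemma; it is quoted verbatim as Lemma~2.6 of Moshkovitz--Shapira. So there is no in-paper proof to compare against, and I am evaluating your argument on its own. Your proof is correct, self-contained, and in fact yields the stronger bound $N(t,n)\ge t^{n-1}/\sqrt n$ for all $t,n\ge 1$ (with equality at $n=1$). All the steps check: the identification $N(t,n)=t^n\max_k\pr(S=k)$ with $S$ a sum of $n$ i.i.d.\ uniforms on $\{0,\dots,t-1\}$; the symmetry and unimodality of the law of $S$ (the moving-sum induction is fine, and handles flat stretches); the representation of a symmetric unimodal $\ZZ$-distribution as a $\nu$-mixture of uniforms on nested centered intervals via Abel summation; the law of total variance giving $\E_\nu[|I|^2]=12\,\mathrm{Var}(S)+1=n(t^2-1)+1$; the identity $\max_k\pr(S=k)=\E_\nu[1/|I|]$ because the (one or two) central points lie in every $I_j$; and the Jensen/Cauchy--Schwarz chain $\E_\nu[1/|I|]\ge 1/\E_\nu[|I|]\ge 1/\sqrt{\E_\nu[|I|^2]}$. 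Your parity remark is handled correctly: when $n(t-1)$ is odd the intervals have even cardinality $|I_j|=2j+2$ and the formulas $\mathrm{Var}(\mathrm{Unif}(I))=(|I|^2-1)/12$, per-point mass $1/|I|$ are unaffected. The route you take --- passing through the second moment of $|I|$ via the mixture representation --- is what buys the constant $1$ in place of $2/3$; a more bare-hands alternative (Chebyshev to localize $S$ within a window of $\approx t\sqrt n$ integers, then pigeonhole) is the kind of argument that would produce a constant like $2/3$. Both are elementary (no local limit theorem, hence short of the sharp $\sqrt{6/\pi}$), and both more than suffice for the paper, which only invokes the lemma to swallow a polynomial-in-$\log n$ factor into the error term of Theorem~\ref{thm:t^n}.
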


    An element $x \in [t]^n=\{0,1,\ldots,t-1\}^n$ is called a \textit{point}. For $l \in \{0,1,\ldots,t-1\}$, let $d_l(x)$ be the number of coordinates of $x$ equal to $l$. We say $x$ is \textit{low} if $d_l(x)<\frac{n}{2t}$ for some $1\leq l\leq t-1$ and \textit{high} otherwise. We also say a chain $C_j\in\mathcal{C}$ is \textit{low} if it contains a low point and \textit{high} otherwise.

	Let $(\cF(t,n),\prec)$ be the poset on the family of monotone Boolean functions on $[t]^n$ where $f \prec g$ iff $f(x) \le g(x) \ \forall x \in [t]^n$. Observe that $|\cF(t,n)|=\alpha([t]^n)$. Thus, with $\bff$ a uniformly chosen element of $\cF(t,n)$, we have
 \[\log\alpha([t]^n)\stackrel{\eqref{max.unif}}{=}H(\bff).\]
 Following the approaches in \cite{Pippenger1999EntropyAE} and \cite{Carroll2012CountingAA}, we will define a series of random variables that determine $\bff$ in order to bound $H(\bff)$. In what follows, we use bold-face letters ($\mathbf f, \mathbf{y}, \mathbf {Y}, \ldots)$ for random variables, while plain letters ($f, y, Y, \ldots)$ represent values that the corresponding random variable takes.

 For $f \in \cF=\cF(t,n)$ and $j \in [N]$, let $\gamma_j(f)=|\{x\in C_j:f(x)=1\}|.$ Note that $(\gamma_j(f))_j$ determines $f$. But exposing $\gamma_j(f)$ for all $j$ is too expensive, so we will make a random choice on which chains to expose. To that end, first define  $\mathbf y_j$ as follows. Let $p=\left[t\log((t-1)n)/n\right]^{1/2}$.  If the chain $C_j$ is high, then
$\pr(\mathbf y_j=1)=p=1-\pr(\mathbf y_j=0)$; if $C_j$ is low, then $\mathbf y_j \equiv 1$. Having $\mathbf y_j=1$ means we will expose $\gamma_j(f)$. Define $\tilde \bY_j(f)=\mathbf y_j\gamma_j(f)$. 

As in \cite{Pippenger1999EntropyAE,Carroll2012CountingAA}, in order to complement $(\tilde \bY_j(f))_j$, we introduce another random variable. Write $\tilde \bY=(\tilde \bY_j)_j$ (and similarly for $\hat \bY$ later). Given $\tilde \bY(f)=\tilde Y$, let $\tilde{f}$ be the smallest in $\cF$ that satisfies $\gamma_j(\tilde{f})\geq\tilde{Y}_j$ for all $1\leq j\leq N$, and set $\hat{Y}_j=\gamma_j(f)-\gamma_j(\tilde{f}) \ (\ge 0).$  Observe that $f$ is determined by the pair $(\tilde Y(f), \hat Y(f))$, so in particular,
       $$H(\bff)\leq H(\tilde \bY(\bff), \hat \bY(\bff)) \le H(\tilde \bY(\bff))+H(\hat \bY(\bff)).$$
(Note that, given $f$, the randomness of $\tilde \bY(f)$ and $\hat \bY(f)$ is inherited from $\mathbf y$.)
        Therefore, the next two assertions complete the proof of Theorem \ref{thm:t^n}. Here and for the rest of the section, we will use $\eps, \eps', \ldots$ for small (absolute) constants. 
 \beq{lem.tilde Y} H(\tilde \bY(\bff)) \le (2+\eps)N \frac{t^{1/2}(\log((t-1)n))^{3/2}}{n^{1/2}},\enq
 and
 \beq{lem.hat Y} H(\hat \bY(\bff)) \le   N\left(1+\frac{(4+2\eps')t^{1/2}(\log(t^{1/2}(t-1)n))^{3/2}}{n^{1/2}}\right).\enq

The following handy lemma is given in \cite{Pippenger1999EntropyAE}. 
    \begin{lemma}[Pippenger \cite{Pippenger1999EntropyAE}]\label{lem:pip}
		Suppose the random variable $\mathbf K$ takes values in $\{0,1,\dots,n\}$, and $P(\mathbf K \geq k) \leq q$ for some $k\geq1$ and $0\leq q\leq1$.
		Then 
  \[H(\mathbf K) \leq h_1(q) + \log k + q \log n,\]
  where
  \[h_1(q)=\begin{cases}
    -q\log q-(1-q)\log(1-q) & \text{if} \quad q \in [0,1/2];\\
    1 & \text{if} \quad q \in [1/2,1].
\end{cases}\]
	\end{lemma}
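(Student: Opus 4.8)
The plan is to condition on the occurrence of the rare event $\{\mathbf K \ge k\}$ and apply the chain rule for entropy. Concretely, I would let $\mathbf 1$ be the indicator of $\{\mathbf K \ge k\}$ and set $q' := \pr(\mathbf K \ge k) \le q$. Since $\mathbf 1$ is a function of $\mathbf K$, we have $H(\mathbf 1 \mid \mathbf K) = 0$, so $H(\mathbf K) = H(\mathbf K, \mathbf 1) = H(\mathbf 1) + H(\mathbf K \mid \mathbf 1)$, and I would bound the two summands separately.

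For $H(\mathbf 1)$: this is exactly the binary entropy of $q'$. If $q \le 1/2$, then $q' \le q \le 1/2$, and monotonicity of the binary entropy function on $[0,1/2]$ gives $H(\mathbf 1) \le H(q) = h_1(q)$; if $q \ge 1/2$, then $H(\mathbf 1) \le 1 = h_1(q)$ trivially. Hence $H(\mathbf 1) \le h_1(q)$ in all cases. For $H(\mathbf K \mid \mathbf 1)$: conditioned on $\mathbf 1 = 0$ the range of $\mathbf K$ lies in $\{0,\dots,k-1\}$, so by \eqref{max.unif} the conditional entropy is at most $\log k$; conditioned on $\mathbf 1 = 1$ the range lies in $\{k,\dots,n\}$, a set of size $n-k+1 \le n$ (using $k \ge 1$), so the conditional entropy is at most $\log n$. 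Averaging over $\mathbf 1$ with weights $1-q'$ and $q'$, then using $q' \le q$ together with $0 \le \log k \le \log n$, I would obtain
\[H(\mathbf K \mid \mathbf 1) \le (1-q')\log k + q'\log n = \log k + q'(\log n - \log k) \le \log k + q\log n.\]
Adding the two bounds yields $H(\mathbf K) \le h_1(q) + \log k + q\log n$, as desired.

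I do not expect a genuine obstacle: this is a clean application of the entropy chain rule together with the maximum-entropy bound \eqref{max.unif}. The only steps requiring a little care are (i) invoking monotonicity of the binary entropy function to justify replacing $H(q')$ by the piecewise quantity $h_1(q)$ (which is why the $q\ge 1/2$ branch of $h_1$ is simply the constant $1$), and (ii) the elementary estimate $(1-q')\log k + q'\log n \le \log k + q\log n$, which relies on $\log k \le \log n$ — equivalently $k \le n$, which may be assumed without loss of generality since $\pr(\mathbf K \ge k) = 0$ once $k > n$ — and on $q' \le q$ with $\log k \ge 0$.
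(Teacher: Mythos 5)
The paper does not actually prove this lemma --- it is cited directly from Pippenger \cite{Pippenger1999EntropyAE} and used as a black box --- so there is no in-paper proof to compare against. That said, your argument is correct and is the standard (indeed, essentially Pippenger's own) way to establish the bound: introduce the indicator $\mathbf 1 = \mathbf 1_{\{\mathbf K \ge k\}}$, use $H(\mathbf K) = H(\mathbf 1) + H(\mathbf K \mid \mathbf 1)$, bound $H(\mathbf 1) = H(q')$ by $h_1(q)$ (monotonicity of binary entropy on $[0,1/2]$ in one case, the trivial cap $H(\mathbf 1)\le 1$ in the other), and bound the two conditional entropies by the logarithm of the sizes of the respective ranges via \eqref{max.unif}. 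The final averaging step $(1-q')\log k + q'\log n \le \log k + q\log n$ is handled correctly, including the observation that one may assume $k \le n$ (if $k > n$ then $\log k \ge \log(n+1) \ge H(\mathbf K)$, so the bound is trivial). No gaps.
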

For future reference, we remark that
 \beq{h1.bound} \text{$h_1(q) \le -2q\log q$ \; for $q \le 1/2$}.\enq

  \begin{proof}[Proof of \eqref{lem.tilde Y}]  If $C_j$ is low, then we apply the naive bound $H(\tilde \bY_j(\bff)) \le \log((t-1)n+2).$ If $C_j$ is high, then noticing that $\tilde \bY_j(\bff) \ge 1$ implies $\mathbf y_j=1$, we apply Lemma \ref{lem:pip} with $k=1$ and $q=p$ to obtain
 \[H(\tilde \bY_j(\bff)) \le h_1(p)+p\log ((t-1)n+1).\]
 Therefore, with $M$ the number of low chains in $\mathcal C$,
 \beq{tilde.Y.bound} H(\tilde{\bY}(\bff))\leq\sum_{j=1}^{N}H(\tilde{\bY}_j(\bff)) \le M\log((t-1)n+2)+(N-M)(h_1(p)+p\log((t-1)n+1)).\enq
 
 Since each low chain contains a low point, we may bound $M$ by the number of low points. Any such point satisfies $d_l(x)<\frac{n}{2t}$ for some $l \in \{1,\ldots,t-1\}$. Let $\mathbf x$ be a uniformly random point in $[t]^n$; equivalently, each coordinate of $\mathbf x$ is chosen uniformly and independently from $\{0,\ldots,t-1\}$. Then for each $l \in [t-1],$ the Chernoff bound yields that $\pr(d_l(\mathbf x)<n/(2t))\leq e^{-n/(8t)}$, 		hence by the union bound,
		\beq{bound.M} M\leq (t-1) t^ne^{-n/(8t)}\le t^{n+1}e^{-n/(8t)}.\enq
By combining \eqref{h1.bound} (note $p \le 1/2$ since $t \le n/(100\log n)$), \eqref{tilde.Y.bound} and \eqref{bound.M}, we have
		\begin{align*}
			H(\tilde{\bY}(\bff))&\leq  t^{n+1}e^{-n/(8t)}\log((t-1)n+2)+N(-2p\log p+p\log((t-1)n+1))\\
   & \le (2+\eps)N \frac{t^{1/2}(\log((t-1)n))^{3/2}}{n^{1/2}}\,,
		\end{align*} 
  where the last inequality uses Lemma \ref{lem.MS} (and that $t \le n/(100\log n)$).
  \end{proof}

	\begin{proof}[Proof of \eqref{lem.hat Y}] For each $j \in [N]$, let $q_j=\pr(\hat \bY_j (\bff) \ge 2)$. By Lemma \ref{lem:pip} with $k=2$ and $q=q_j$, $H(\hat \bY_j(\bff)) \le h_1(q_j)+1+q_j\log((t-1)n).$ Therefore,
with  $Q:=\sum_{j=1}^Nq_j$,
\[\begin{split} H(\hat \bY(\bff))\le \sum_j H(\hat \bY_j(\bff)) &\le \sum_j 
 [h_1(q_j)+1+q_j\log((t-1)n)]\\
& \stackrel{(*)}{\le} Nh_1(Q/N)+N+Q\log((t-1)n)\,,\end{split}\]
where $(*)$ follows from the concavity of $h_1$ (so $\frac{1}{N}\sum_j h_1(q_j) \le h_1\left(\sum_j q_j/N\right)$). We will prove that
\beq{Q.bound} Q\le \frac{(2+\eps')t^{1/2}(\log (t^{1/2}(t-1)n))^{1/2}}{n^{1/2}}N.\enq
Let's first show that \eqref{Q.bound} implies \eqref{lem.hat Y}. By \eqref{h1.bound} (and $Q/N \le 1/2$),
		\begin{align*}
			h_1\left(\frac{Q}{N}\right)&\leq \frac{(2+\eps')t^{1/2}(\log(t^{1/2}(t-1)n))^{1/2}}{n^{1/2}}\log n,
   \end{align*}   
   thus
		\begin{align*}
			H(\hat{\bY}(\bff))&\leq  
   N\left(1+\frac{(4+2\eps')t^{1/2}(\log(t^{1/2}(t-1)n))^{3/2}}{n^{1/2}}\right).
		\end{align*}

  The rest of this section is devoted to proving \eqref{Q.bound}. To this end, we first define an event that is implied by $\hat Y_j(f) \ge2$. Given $f$ and $\tilde Y(f)$, say a point $x \in C_j$ is \textit{bad} if 
   \begin{enumerate}[(i)]
       \item  $x$ is high (that is, $d_l(x) \ge \frac{n}{2t}$ for all $1 \le l \le t-1$); and
    \item $\tilde{f}(x)=0$; and
       \item $f(y)=1$ for $y \in C_j$ that is immediately below $x$.
         \end{enumerate} 
Define $r_x$ to be the probability (w.r.t. $\bff$ and $\mathbf y$) that $x$ is bad. Notice that if $\hat Y_j(f) \ge 2$, then the chain $C_j$ must contain a bad vertex. Therefore, 
\[Q=\sum_j q_j \le \sum_j \pr(\text{some $x \in C_j$ is bad}) \le \sum_x r_x.\]

Set $s=p^{-1}\log(t^{1/2}(t-1)n)=(n/t)^{1/2}(\log (t^{1/2}(t-1)n))^{1/2}$. Say $y$ is a \textit{$k$-child} ($k \in \{1,\ldots,t-1\}$) of $x$ if $y \lessdot x$ and $y$ differs from $x$ in a coordinate that has the value $k$ in $x$. Given $f$, define a point $x$ is \textit{heavy} if there exists some $k \in \{1,\ldots,t-1\}$ for which $x$ has at least $s$ of $k$-children $y$ with $f(y)=1$, and \textit{light} otherwise. Say $x$ is \textit{occupied} if $f(x)=1$. 

Given $x \in [t]^n$, let $C_x \in \cC$ be the chain containing $x$, and $y=y(x)$ be $y \lessdot x$ and $y \in C_x$. Such $y$ exists by Proposition \ref{GK_extend} unless $x$ is the smallest in $C_x$. If $x$ is the smallest in $C_x$, then write $y(x)=\emptyset$. Note that 
\beq{bad.implication} \text{if $x$ is bad, then $x$ is high and occupied, $y(x) \ne \emptyset$, and $f(y)=1$.}\enq For convenience, denote by $Q_x$ the event that $x$ is high, occupied, and $y(x) \ne \emptyset$. Then by \eqref{bad.implication},

		\beq{rx.formula}\begin{split}
			r_x &=\pr(x \text{ heavy and bad})+\pr(x \text{ light and bad})\\
			&\leq \pr(x \text{ heavy and bad})+\pr(x\text{ light, $Q_x$})\cdot \pr(\bff(y)=1|x  \text{ light, $Q_x$}).\end{split}\enq
We will bound each term in the expression above.

\nin $\bullet$ \textit{Bounding $\pr(x \text{ heavy and bad})$.} Say $x$ is \textit{$k$-heavy} for $k \in \{1,2,\ldots,t-1\}$ if $x$ has at least $s$ of $k$-children $y$ with $f(y)=1$. In order for a $k$-heavy $x$ to be bad, each of the chains $\pi(C_i)$ containing the $k$-children $y$ with $f(y) = 1$ must have $\mathbf y_i=0$, which happens with probability at most $1-p$. Therefore,
\beq{bad given heavy}\begin{split} \pr(x \text{ heavy and bad}) & \le \sum_k \pr(\text{$x$ $k$-heavy and bad})\\
&\le \sum_k \pr(\text{$x$ bad$|$ $x$ $k$-heavy)} \le (t-1)(1-p)^s\leq (t-1)e^{-ps}=\frac{1}{t^{1/2}n}.\end{split}\enq

\smallskip

\nin $\bullet$ \textit{Bounding $\pr(\bff(y)=1|x  \text{ light, $Q_x$})$.} 
Suppose $y$ is a $k$-child of $x$ for some $k$, and let $N_k(x)$ be the set of $k$-children of $x$.  Consider $S_n$, the group of permutations on $[n]$. For $\pi \in S_n$, $\pi$ acts on $x \in [t]^n$ by $\pi(x)=(x_{\pi(1)},\ldots, x_{\pi(n)})$ and on $f \in \cF$ by
$\pi(f)=f(\pi(x)).$ Let $\text{Stab}(x)=\{\pi \in S_n: \pi(x)=x\},$ noticing that $\text{Stab}(x)$ acts transitively on $N_k(x)$. Therefore, for each $f \in \cF$, the fraction of elements $g$ in $\text{Orb}(f):=\{\pi(f):\pi \in \text{Stab}(x)\}$ with $g(y)=1$ is precisely $|f^{-1}(1) \cap N_k(x)|/|N_k(x)|$. Given that $x$ is high, light and occupied, $N_k(x) \ge n/(2t)$ and $|f^{-1}(1)\cap N_k(x)| \le s$. Finally, $\{\text{Orb}(f):f \in \cF\}$ partitions $\cF$. Therefore,
\beq{bad given light} \pr(\bff(y)=1|\text{$x$ light, $Q_x$})\le \frac{s}{\frac{n}{2t}}=\frac{2t^{1/2}(\log(t^{1/2}(t-1)n))^{1/2}}{n^{1/2}}.\enq

\smallskip

\nin $\bullet$ \textit{Bounding $\sum_x \pr(x\text{ light, $Q_x$})$.} Given $f$, write $R(f)$ for the set of points that are light and satisfy $Q_x$. Say $x$ is \textit{marginal} if $x$ is occupied, $y(x) \ne \emptyset$, and $f(y)=0$. Then
\[\begin{split} \pr(\text{$x$ non-marginal}|\text{$x$ light, $Q_x$})&\le \pr(\bff(y)=1|\text{$x$ light, $Q_x$}) \\
&\stackrel{\eqref{bad given light}}{\le} \frac{2t^{1/2}(\log(t^{1/2}(t-1)n))^{1/2}}{n^{1/2}},\end{split}\]
so,
\[\begin{split}\mathbb E[|\{x:\text{$x$ marginal, light, $Q_x$}\}|] \ge \left(1-\frac{2t^{1/2}(\log(t^{1/2}(t-1)n))^{1/2}}{n^{1/2}}\right)\mathbb E[|R(\bff)|].\end{split}\]
However, for any $f$, the number of marginal points is at most $N$ since each chain has at most one marginal point. Therefore, (using the fact that $1/(1-x) \le 1+2x$ for $x \in [0,1/2]$,)
\beq{sum.x.note}\sum_x\pr(\text{$x$ light, $Q_x$})=\mathbb E[|R(\bff)|] \le \left(1+\frac{4t^{1/2}(\log (t^{1/2}(t-1)n))^{1/2}}{ n^{1 / 2}}\right) N.\enq

Now, by combining \eqref{rx.formula}, \eqref{bad given heavy}, \eqref{bad given light}, and \eqref{sum.x.note},  we obtain that
 \[\begin{split} Q & \le \frac{(2+\eps')t^{1/2}(\log (t^{1/2}(t-1)n))^{1/2}}{n^{1/2}}N.
 \end{split}\]
\end{proof}

	\bibliographystyle{abbrv}
	\bibliography{references230402}

\appendix

\section{Proof of Proposition \ref{GK_extend}}

We extend the chain construction in \cite{Kleitman1976OnJCP} to $[t]^n$. This extension was also used in \cite{Tsai2019}. For each $x=(x_i) \in [t]^n=\{0,1,\ldots,t-1\}^n$, we define the corresponding "bracket configuration" as follows: consider $n$ consecutive blocks where each block consists of $t-1$ blanks. We fill in the blanks in the $i$th block from the left, first with $x_i$ right parentheses followed by $t-x_i-1$ left parentheses. Then define $B(x)$ to be the set of matched parentheses.

For example, if $x=(0,2,1,3,2,1)\in [4]^6$, then the corresponding bracket configuration and matched parentheses are

\begin{figure}[h!]
    \centering
    \includegraphics[width=8.5cm]{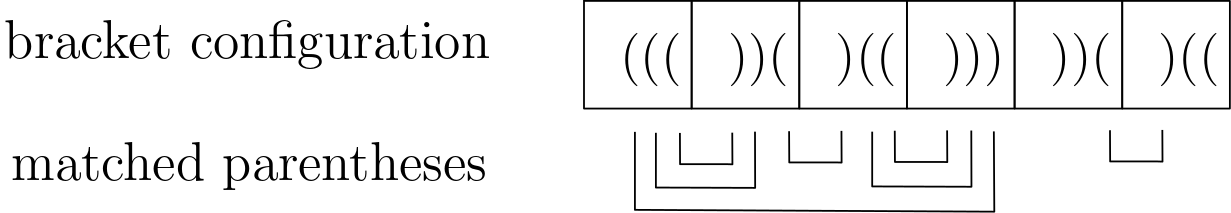}
\end{figure}

\nin For each $x \in [t]^n$, define the equivalent class $C_x=\{y \in [t]^n:B(y)=B(x)$\}. For example, for the above $x$, $C_x=\{(0,2,1,3,0,1), (0,2,1,3,1,1), (0,2,1,3,2,1), (0,2,1,3,2,2), (0,2,1,3,2,3)\}$.

 Then $\cC:=\{C_x:x \in [t]^n\}$ is a chain decomposition of $[t]^n$ of size $N$ (since each point in the middle layer is contained in exactly one chain in $\cC$). From the definition, it is easy to see that $\cC$ satisfies \eqref{WMA}.

\end{document}